  \newcommand{\resp}{{\it resp.} }
\newcommand{\cf}{{\it cf.} }
\newcommand{\ie}{{\it i.e.} }
\newcommand{\eg}{{\it e.g.} }
\newcommand{\loccit}{{\it loc. cit.} }
 \renewcommand{\d}{{\sm{\partial}}}
\newcommand{\Q}{\mathbb{Q}}
\newcommand{\C}{\mathbb{C}}
\newcommand{\sI}{{\mathcal{I}}}
\newcommand{\sK}{{\mathcal{K}}}
\newcommand{\sL}{{\mathcal{L}}}
\newcommand{\sM}{{\mathcal{M}}}
\newcommand{\sN}{{\mathcal{N}}}
\newcommand{\sO}{{\mathcal{O}}}
\newcommand{\sP}{{\mathcal{P}}}
\newcommand{\sR}{{\mathcal{R}}}
\newcommand{\sS}{{\mathcal{S}}}
\newcommand{\inj}{\hookrightarrow}
\newcommand{\surj}{\rightarrow\!\!\!\!\!\rightarrow}
\font\sm=cmr10 at 10pt
\font\smit= cmti10 at 10pt
 \newcommand{\End}{\operatorname{End}}
  \newcommand{\Hom}{\operatorname{Hom}}
  \newcommand{\car}{\operatorname{char}}
\newcommand{\Ker}{\operatorname{Ker}}
\newcommand{\rk}{\operatorname{rk}}
\newcommand{\im}{\operatorname{im}}
 \newcommand{\Spec}{\operatorname{Spec}}
\newcounter{spec}
\newtheorem{thm}{Theorem}[subsection]
\newtheorem{lemma}[thm]{Lemma}
\newtheorem{prop}[thm]{Proposition}
\newtheorem{cor}[thm]{Corollary}
\theoremstyle{definition}
 \newtheorem{defn}[thm]{Definition}
\newtheorem{ex}[thm]{Example}
\newtheorem{exs}[thm]{Examples}
\newtheorem{rem}[thm]{Remark}
\numberwithin{equation}{section}
\font\sm=cmr10 at10pt
\renewcommand{\qed}{\hfill $\square$\medskip}
\begin{document}

\title[Solution algebras and quasi-homogeneous varieties]{Solution algebras of differential equations and quasi-homogeneous varieties: a new differential Galois correspondence}

\author{Yves
Andr\'e}
 
  \address{D\'epartement de Math\'ematiques et Applications, \'Ecole Normale Sup\'erieure  \\ 
45 rue d'Ulm,  75230
  Paris Cedex 05\\France.}
\email{andre@dma.ens.fr}
   \date{\today}
  \keywords{solution algebra, differential algebra, differential Galois group, observable subgroup,  geometric invariant theory, quasi-homogeneous variety, E-function, transcendence.} \subjclass{ 12H05, 14M17, 14L24, 11J81.}

  \begin{abstract}  We develop a new connection between Differential Algebra and Geometric Invariant Theory, based on an anti-equivalence of categories between \emph{solution algebras} associated to a linear differential equation  (\ie  differential algebras generated by finitely many polynomials in a fundamental set of solutions), and {\it affine quasi-homogeneous varieties} (over the constant field) for the differential Galois group of the equation. 
  
  Solution algebras can be associated to any connection over a smooth affine variety. It turns out that he spectrum of a solution algebra is an algebraic fiber space over the base variety, with quasi-homogeneous fiber. We discuss the relevance of this result to Transcendental Number Theory. 
  \end{abstract}
\maketitle

 \begin{sloppypar}

\section*{Introduction}

 \bigskip\subsection{Introduction}\label{su1.1} Let $\,K\,$ be a field endowed with a non-zero derivation  $\,\d$, with  algebraically closed constant field $\,C= \Ker \d\,$. 
Let $$ \ \phi(y) = \d^n y + a_{n-1} \d^{n-1}y  + \cdots + a_0 y = 0$$ be a linear differential equation with coefficients $a_i$ in  $K$, and let $\,y_0,\ldots, , y_{n-1}\,$ form a $C$-basis of solutions in some differential extension of $K$ with constant field $C$. 

\smallskip The Picard-Vessiot algebra of $\phi$ is the $K$-algebra generated by the derivatives $\d^j y_i $ and the inverse of the wronskian $ {\rm{det}}(\d^j y_i)$. It is the ring of coordinates of a principal homogeneous space over $K$ under the differential Galois group $G$ of $\phi$. Through Kolchin's work, this fact has been a source of motivation and applications in the early development of the theory of linear algebraic groups and their principal homogeneous spaces (\cf \cite[chap. VIII]{Bor}).

 \medskip  In this paper, we study the finitely generated differential subalgebras of a Picard-Vessiot algebra, which we call {\it solution algebras}.  
 
 Curiously,  traditional differential Galois theory has little to say about solution algebras beyond the Picard-Vessiot case - for instance about the algebraic relations between a single solution $y_0$ and its derivatives (a problem which occurs in transcendental number theory for instance, \cf \ref{su1.6}\footnote{after completion of this work, D. Bertrand pointed out to us the paper \cite{Com1} (\cf also \cite{Com2}), in which this problem is studied for generalized confluent hypergeometric differential equations.}). 
 
 The traditional differential Galois correspondence classifies differential {\it subfields} of the {\it fraction field} of the Picard-Vessiot algebra.  No such classification in terms of subgroups of the differential group $G$ exists at the level of differential subalgebras. 
 
 For instance, the Picard-Vessiot algebra $C(z)$-algebra $R'$ of the Airy equation $\frac{d^2y}{dz^2} = zy$ is the coordinate ring of $SL_2$, and the subalgebra $A$ generated by the logarithmic derivative of a single non-zero solution $y_0$ is a finitely generated differential subalgebra of the fraction field $Q(R')$ (not of $R'$); the fraction field of $A$ corresponds to a Borel subgroup $B$ of $SL_2$: one has $Q(R')^B= Q(A)$; but $(R')^B=C$, not $A$.   
    
   \medskip As we shall see,  the study of solution algebras involves finer notions from geometric invariant theory than just algebraic groups and torsors:  in fact, the whole theory of affine quasi-homogeneous varieties comes into play. 
   
 The differential Galois correspondence can be restored at the level of solution algebras in the form of an {\it anti-equivalence of categories between solution algebras as above and affine quasi-homogeneous $G$-varieties over $C$}.

  \smallskip After pioneering work by Grosshans, Luna, Popov, Vinberg and others in the seventies, the study of {\it quasi-homogeneous $G$-varieties}, \ie algebraic $G$-varieties with a dense $G$-orbit, has now become a rich and deep theory. The precise dictionary given below between the theory of affine quasi-homogeneous varieties and differential Galois theory should thus enrich considerably the latter, and may provide a source of motivation and applications for the former.
 We take advantage of this correspondence to study the algebraic structure of solution algebras (for instance, linear relations between solutions), with an eye towards transcendental number theory.

\section{Statement of the main results}

Our results take place in the general context of modules with connection over an affine basis\footnote{for a more geometric setting, see 6.5 (2).}, but in this introduction, we restrict ourselves to the context of differential modules over a differential ring (in the classical sense).

\subsection{Picard-Vessiot fields (reminder, \cf \cite{M}\cite{PS})} Let $(K, \d)$ be a differential field with algebraically closed constant field $C= K^\d$ of characteristic $0$. Let $K\langle \d\rangle$ denote the corresponding ring of differential operators. Let $ M $ be a differential module over $K$, that is, a $K\langle \d\rangle$-module of finite dimension $n$ over $K$ (for instance $M = K\langle \d\rangle/K\langle \d\rangle \phi,$ where $\phi$ is a differential operator as above). The finite direct sums of tensor products $M^{\otimes i}\otimes (M^\vee)^{\otimes j}$ and their subquotient differential modules form a tannakian category $\langle M\rangle^{\otimes}$ over $C$.
  
 A Picard-Vessiot field $K' $ for $M$ is a differential field extension of $K$ with constant field $C$, in which $M$ and its dual $M^\vee$ are solvable (\ie ${\rm{Sol}}(M, K'):= {\rm{Hom}}_{K\langle \d\rangle}(M, K')$ and ${\rm{Sol}}(M^\vee, K')$ have dimension $n$ over $C$), and which is minimal for this property. Such a differential field exists and is unique up to non-unique isomorphism. The differential Galois group of $M$, 
 $$G= {\rm Aut}_\d\, K'/K, $$ is a linear algebraic group over $C$ which acts faithfully on $\,{\rm{Sol}}(M, K')$.
 
The differential Galois correspondence is an order-reversing bijection between intermediate differential extensions $K\subset L\subset K'$ and closed subgroups $H< G\,$, given by $H = {\rm Aut}_\d\, K'/L$ and $L= (K')^H$. One has $\,{\rm{tr.deg}}_KL= \dim G-\dim H$.

\subsection{Solution fields} 

\begin{defn} A {\emph{solution field}} $(L,\d)$ for $M$ is a differential field extension of $(K,\d)$ with constant field $L^\d = C$, which is generated by the image of a $K\langle \d\rangle$-morphism $v: M\to L$. 
\end{defn}
    
     \smallskip   In the next theorem, ``solution field" means  ``solution field for some $N\in  \langle M\rangle^{\otimes}$". For instance, the Picard-Vessiot field $K'$ is a solution field for $M^n \oplus (M^\vee)^n$.
    
    \begin{thm}\label{T1.1} \begin{enumerate}      \item  Any solution field $L$ embeds as a differential subfield of the Picard-Vessiot field $K'$.
    \smallskip\item Conversely, an intermediate differential field $K\subset L\subset K'$ is a solution field if and only if the corresponding subgroup $H<G$ is {\emph{observable}} (\ie $G/H$ is quasi-affine). In fact, $H$ is the isotropy group of any solution $v\in {\rm{Sol}}(N, K')$ whose image generates $L$.
     \smallskip\item For any solution field $L=(K')^H$,  $ \, {\rm Aut}_\d\, L/K = N_G(H)/H.$
    \end{enumerate}
    \end{thm}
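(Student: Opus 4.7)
The proof revolves around Tannakian duality for $\langle M\rangle^{\otimes}$: to every object $N\in\langle M\rangle^{\otimes}$ is attached the finite-dimensional $G$-representation $\mathrm{Sol}(N,K')=\Hom_{K\langle\d\rangle}(N,K')$, and the datum of a $K\langle\d\rangle$-morphism $v:N\to K'$ is the same as the datum of a vector $v\in\mathrm{Sol}(N,K')$. Crucially, for $g\in G$ one has $g\cdot v=v$ iff $g$ fixes the finite-dimensional $C$-subspace $v(N)\subset K'$ pointwise, iff $g$ fixes the differential subfield $K\langle v(N)\rangle\subset K'$ pointwise.

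For (i), embed both $L$ and $K'$ into a sufficiently large differential extension $\Omega/K$ with constant field $C$, \eg a common universal extension in which the linear equation defining $N$ is completely solvable. The morphism $v:N\to L\subset\Omega$ then defines an element of $\mathrm{Sol}(N,\Omega)$; since $K'$ already carries a full $C$-basis of solutions of $N$ and no new constants arise in $\Omega$, one has $\mathrm{Sol}(N,\Omega)=\mathrm{Sol}(N,K')$, so $v(N)\subset K'$ and hence $L=K\langle v(N)\rangle\subset K'$.

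For (ii), first suppose $L\subset K'$ is a solution field for $v:N\to K'$. By the equivalence recalled above, the Galois subgroup $H$ attached to $L$ is exactly the isotropy group $\mathrm{Stab}_G(v)\subset G$, an isotropy group of a vector in a finite-dimensional $G$-representation; this makes $H$ observable, by the Bialynicki-Birula--Hochschild--Mostow characterization of observable subgroups as stabilizers. Conversely, given an observable $H<G$, choose (by the same characterization) a finite-dimensional $G$-module $V$ and $v\in V$ with $\mathrm{Stab}_G(v)=H$. Tannakian duality yields an $N\in\langle M\rangle^{\otimes}$ with $V\simeq\mathrm{Sol}(N,K')$, and the morphism $\tilde v:N\to K'$ corresponding to $v$ generates a solution field whose Galois subgroup is again $\mathrm{Stab}_G(\tilde v)=H$; this solution field is therefore $(K')^H=L$.

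For (iii), any $g\in N_G(H)$ satisfies $g\cdot L=g\cdot(K')^H=(K')^{gHg^{-1}}=(K')^H=L$, so restriction defines a homomorphism $N_G(H)\to\Aut_\d(L/K)$ whose kernel equals $H$ by the Galois correspondence. For surjectivity, any $\sigma\in\Aut_\d(L/K)$ is a differential $K$-embedding $L\hookrightarrow K'$; viewing $K'$ as a Picard-Vessiot extension of $L$ and using the standard lifting property of such extensions, $\sigma$ extends to some $\tilde\sigma\in G$, which then lies in $N_G(H)$ because it preserves $L$. The main difficulty I anticipate is the converse half of (ii): it requires invoking the nontrivial characterization of observable subgroups as stabilizers and then using Tannakian duality to transport an abstract finite-dimensional $G$-module back to a genuine object of $\langle M\rangle^{\otimes}$ through which the vector $v$ becomes an actual differential solution morphism.
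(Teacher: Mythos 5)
Your part (ii) is essentially the paper's own argument: the identification of the Galois group of $L=K(v(N))$ with the isotropy group of $v$ in the $G$-module $\operatorname{Sol}(N,K')$, combined with the characterization of observable subgroups as stabilizers of vectors in rational representations, is exactly how the paper proceeds (there phrased via the pairing $\langle n, h.v\rangle = h(\langle n,v\rangle)$ and the functor $\omega$). Parts (i) and (iii) take a more classical route than the paper. For (i), the paper goes through solution \emph{algebras}: it builds a Picard--Vessiot algebra for $M_L$ over $L$, locates inside it a Picard--Vessiot algebra for $M$ over $K$ containing the solution algebra, and invokes uniqueness. For (iii), the paper computes $\Aut_\d\,(R')^H/K=\Aut_G\,C[G/H]=N_G(H)/H$ via the Tannakian functor and then shows every automorphism of $L$ preserves $(R')^H$; your normalizer-plus-lifting argument is the classical alternative and is correct, provided you justify the lifting by the fact (stated in the paper's \S 4.1) that $K'$ is a Picard--Vessiot field for $M_L$ over $L$ together with uniqueness of Picard--Vessiot extensions.

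The one genuine gap is the first step of (i). The existence of a common differential extension $\Omega$ of $L$ and $K'$ over $K$ \emph{with constant field $C$} is precisely the nontrivial content of this part, and you assert it rather than prove it. Moreover, ``a common universal extension'' is the wrong object: Kolchin's universal extension has a constant field that is a large algebraically closed extension of $C$, and with enlarged constants the dimension count $\operatorname{Sol}(N,\Omega)=\operatorname{Sol}(N,K')$ breaks down (the solution space over the bigger constant field could a priori contain $v$ without $v$ landing in $K'$). The standard fix is to take $\Omega$ to be the fraction field of $(L\otimes_K K')/\mathfrak m$ for a maximal differential ideal $\mathfrak m$, and to invoke the lemma (van der Put--Singer, Lemma 1.17/1.35; the analogue is Lemma \ref{simple} here) that a simple differential ring finitely generated over $K$, with $C$ algebraically closed of characteristic $0$, is a domain whose fraction field has constants $C$. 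With that substitution your dimension argument $\dim_C\operatorname{Sol}(N,\Omega)\le \operatorname{rk} N=\dim_C\operatorname{Sol}(N,K')$ goes through and (i) is complete.
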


\subsection{Picard-Vessiot algebras}  Even though this result is formulated in terms of traditional differential Galois theory of differential fields, our proof  uses the generalized differential Galois theory for differential rings developed in \cite{A3} (working over differential rings rather than fields is natural, useful, and sometimes necessary in some contexts).

 \smallskip Let $(R, \d)$ be a differential ring with constant field $C$. We assume that $(R, \d)$ is simple, \ie has no non-zero proper differential ideal. It is then known that $R$ is an integral domain, and we denote by $K$ its quotient field. 
  
  Let $M$ be a differential module of finite type over $R$. It can be shown that $M$ is projective, and so are all the finite direct sums of tensor products $M^{\otimes i}\otimes (M^\vee)^{\otimes j}$ and their subquotient differential modules, which form a tannakian category $\langle M\rangle^{\otimes}$ over $C$ (equivalent to $\langle M_K\rangle^{\otimes}$),  \cf \ref{T2.1} below  (instead of $M^{\otimes i}\otimes (M^\vee)^{\otimes j}$, one may consider $M^{\otimes i}\otimes (\det\, M)^{\otimes -j}$, where $\det\, M$ denotes the top exterior power). 
  
 The {Picard-Vessiot algebra} $R'$ for $M$  is the $R$-subalgebra of the Picard-Vessiot field $K'$ for $M_K$  generated by  $\langle M, {\rm{Sol}}(M, K')\rangle$ and $\langle M^\vee,  {\rm{Sol}}(M^\vee, K')\rangle$, its spectrum is a torsor under $G_R$, and $G= {\rm{Aut}}_\d(R'/R)$.

\subsection{Solution algebras}  
    \begin{defn}\label{df2} A {\emph{solution algebra}} $(S,\d)$ for $M$ is a differential $R$-algebra without zero-divisor, whose quotient field has constant field $C$, and which is generated by the image of a $R\langle \d\rangle$-morphism $v: M\to S$. 
\end{defn}
   
 The link with the previous definition is the following (\cf \ref{L4.2}): a differential algebra extension $S/R$ is a solution algebra for $M$ if and only if it is a finitely generated $R$-algebra without zero-divisor and its quotient field $L$ is a solution field for $M_K$; any solution field $L$ for $M_K$ is the quotient field of a solution algebra for $M$.

     \smallskip   In the next theorem,  ``solution algebra" means ``solution algebra for some {$N\in  \langle M\rangle^{\otimes}\,$}".  
         
   \begin{thm}\label{T1.2} \begin{enumerate}    \item Any differential finitely generated sub-$R$-algebra of the Picard-Vessiot algebra $R'$  is a solution algebra.
       \smallskip\item  If $S$ is a solution algebra, then for any embedding of the quotient field $L$ of $S$ into $K'$, $S$ is contained in the Picard-Vessiot algebra $R'$.
                \smallskip\item For any solution algebra $S$ generated by a solution $v$, ${\rm{Spec}}\, (S_{K'})^\d\,$ is the closure $\,\overline{G.v}\,$ of the orbit $\;G. v   \,\subset \,{\rm{Sol}}(M, K')$. This provides an anti-{\emph{equivalence}} of categories between solution algebras and affine quasi-homogeneous $G$-varieties.
         \smallskip\item If $H<G$ is observable,  $(R')^H$ is a solution algebra if and only if $H $ is Grosshans (\ie  $C[G/H]$ is finitely generated).
   \smallskip\item A solution algebra $S$ is simple (as a differential ring) if and only if it is generated by a solution $v$ for which the orbit $\,G.v\,$ is closed. In that case, $S=  (R')^H$.
      \smallskip\item A solution field $L$ is the quotient field of a {{unique}} solution algebra $S$ if and only if the image $\bar H$  of $H$ in the reductive quotient $\bar G$ of $G$ is reductive and $N_{\bar G}(\bar H)/\bar H$ is finite. In that case,  $S$ is simple.
      \smallskip\item Assume that $R$ is finitely generated over $C$.  Then, locally for the \'etale topology on ${\rm{Spec}}\,R$, the spectrum of a solution algebra  $S$ generated by a solution $v$ is isomorphic to $(\overline{G.v})_R\,$ (in particular,  it is an algebraic fiber bundle over $\,{\rm{Spec}}\,R$).
    \end{enumerate}
    \end{thm}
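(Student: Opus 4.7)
The overall strategy is to exploit the $G$-torsor structure of $\Spec R'$ over $\Spec R$, which yields an isomorphism $R'\otimes_R R'\simeq R'\otimes_C C[G]$ of $R'$-algebras compatible with the $G$-action on the right factor. Extending scalars of any differential $R$-subalgebra $S\subset R'$ to $K'$ and taking $\d$-constants will identify $S$ with a $G$-stable $C$-subalgebra of $C[G]$, i.e.\ with the coordinate ring of an affine quasi-homogeneous $G$-variety. This master dictionary powers all seven items.

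Items (1) and (2) will follow directly. A finitely generated differential sub-$R$-algebra $S\subset R'$ is generated by finitely many $v(e_i)$ with the $e_i$ running over a basis of some $N\in\langle M\rangle^\otimes$, whence (1). For (2), Theorem \ref{T1.1} embeds the quotient field $L$ of a solution algebra $S$ into $K'$; the components $v(e_i)$ of the generating solution then lie in ${\rm Sol}(M,K')\subset R'$ by the very definition of $R'$, so $S=R[v(M)]\subset R'$.

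For (3), the core, I fix a $K$-basis $e_1,\dots,e_n$ of $M$ and view $v$ as a solution $\tilde v\in{\rm Sol}(M,K')$ with coordinates $\tilde v_i=v(e_i)\in R'$. Since $M_{K'}$ is spanned over $K'$ by its $\d$-constants, $S_{K'}$ is generated as a $K'$-algebra by the $\d$-constants $\tilde v_i$; hence $(S_{K'})^\d$ should equal the $C$-subalgebra they generate. Under the torsor isomorphism above, the $\tilde v_i$ correspond to the coordinate functions of the orbit map $\mu_{\tilde v}:G\to{\rm Sol}(M,K')$, $g\mapsto g\cdot\tilde v$, so $(S_{K'})^\d$ becomes the image of $\mu_{\tilde v}^*:\sO({\rm Sol}(M,K'))\to C[G]$, namely $\sO(\overline{G.\tilde v})$. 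Reversing this construction yields the inverse functor and hence the anti-equivalence with affine quasi-homogeneous $G$-varieties.

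Items (4)--(7) will then reduce to standard GIT via (3). For (4), $(R')^H$ is a differential $R$-subalgebra, finitely generated over $R$ iff $C[G/H]=C[G]^H$ is so, i.e.\ iff $H$ is Grosshans; (1) supplies the solution-algebra structure. For (5), simplicity of $S$ translates to the absence of proper $G$-stable closed subvarieties in $\overline{G.\tilde v}$, equivalently $G.\tilde v$ closed, and then $G.\tilde v\simeq G/H$ is affine homogeneous with $S=(R')^H$. For (6), uniqueness of $S$ with prescribed quotient field $L=(K')^H$ corresponds to the existence of a unique affine quasi-homogeneous $G$-variety with open orbit $G/H$, which by classical GIT holds precisely when $\bar H$ is reductive and $N_{\bar G}(\bar H)/\bar H$ is finite. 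For (7), $\Spec R'\to\Spec R$ is étale-locally trivial, so pulling $\Spec S$ back along such a trivialization exhibits it as $\Spec R\times_C\overline{G.\tilde v}$.

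The hard part will be (3): specifically, checking that $(S_{K'})^\d$ coincides with the \emph{full} coordinate ring of $\overline{G.\tilde v}$ and not merely the $C$-subring generated by the $\tilde v_i$. This will require verifying that extending scalars from $R$ to $K'$ introduces no spurious $\d$-constants, which in turn uses that $S$ has no zero divisors and that the torsor $\Spec R'\otimes_R K'\to\Spec K'$ is trivial. A secondary subtlety in (6) is the descent to the reductive quotient $\bar G$: unipotent radicals of $G$ contribute no affine invariants but do affect observability, hence must be quotiented out before applying GIT rigidity.
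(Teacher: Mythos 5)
Your master dictionary is exactly the paper's: the functor $\sS\mapsto (S\otimes_R R')^\nabla$ (the paper's $\omega$, obtained from the trivialization of the torsor $\Sigma$ over itself) identifies differential subalgebras of $R'$ with rational $G$-subalgebras of $C[G]$, and items (1) and (4)--(7) then reduce to invariant theory just as in \ref{PT1.2}: Grosshans subgroups for (4), closed orbits and Theorem \ref{T3.2} (4) for (5), Luna/Arzhantsev--Timashev on affinely closed homogeneous spaces for (6), and \'etale-local triviality of the smooth torsor $\Sigma\to \Spec R$ for (7). Your item (2) is the one place you genuinely diverge: the paper compares the given embedding $\iota_1\colon L\to K'$ with the one furnished by Proposition \ref{T3.1}, writes $\iota_1=h\circ\iota_2$ with $h\in H$, and uses that $G$ preserves $R'$; you instead claim that any solution of $N$ in $K'$ automatically has coordinates in $R'$. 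That is true and gives a valid, arguably more direct, proof, but it is not ``the very definition of $R'$'' for a general $N\in\langle M\rangle^{\otimes}$: one needs ${\rm{Sol}}(N,K')=(N^\vee\otimes_R R')^\nabla$, which follows from solvability of $N$ in $R'$ together with the injectivity statement of Lemma \ref{L2.1} applied over $K'$.

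Two points in your treatment of (3) need repair. First, the elements $\tilde v_i=v(e_i)$, for $e_i$ a $K$-basis of $M$, are not $\d$-constants of $S_{K'}$; the constants are the pairings $\langle m_j,v\rangle$ of $v$ against a horizontal $C$-basis $m_j$ of $M_{K'}$ (these also generate $S_{K'}$ over $K'$, the two generating families differing by the invertible fundamental matrix over $K'$). Relatedly, the ``hard part'' you isolate is a tautology: the coordinate ring of the closed subvariety $\overline{G.v}\subset {\rm{Sol}}(M,K')$ is by definition the image of the comorphism of the orbit map, i.e.\ the $C$-algebra generated by its coordinate functions. The genuine content is the opposite inclusion, namely that $(S_{K'})^\d$ is no larger than that image --- equivalently, that taking constants carries the surjection ${\rm{Sym}}^{\cdot}\sM\twoheadrightarrow \sS$ to a surjection and creates no new constants; the paper obtains this from the canonical isomorphism $\omega(\sN)\otimes_C \sR'\simeq \sN\otimes_R\sR'$, which makes $\omega$ an exact $\otimes$-equivalence onto ${\rm{Ind}}\,{\rm{Rep}}\,G$. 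Second, ``reversing this construction'' suppresses the essential-surjectivity half of the anti-equivalence: given an affine quasi-homogeneous $Z$ with base point $v$ in the dense orbit, one must embed $Z$ equivariantly into a $G$-module $V$, realize $V^\vee$ as $\omega(\sN)$ for some $\sN\in\langle\sM\rangle^{\otimes}$ (possible because $G$ is the full Tannaka group of $\langle\sM\rangle^{\otimes}$), transport $C[Z]$ back to a differential algebra $\sS$ equipped with a solution $\sN\to\sS$, and verify that $S$ is a domain whose fraction field has constant field $C$ --- which the paper does by realizing $\sS$ inside $\sR'$. As written, your argument establishes only full faithfulness.
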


    \subsection{From affine quasi-homogeneous varieties to differential modules}\label{su1.6}  On combining the previous theorem with the constructive solution \cite{MS} of inverse differential Galois problem and the triviality of torsors over $C[z]$ under (pull-back of) reductive groups over $C$  \cite{RR}, one obtains the following 
   
   \begin{thm}\label{T1.3} 
       \begin{enumerate}   \item The differential Galois group $G$ of any semisimple differential module $M$ over $(C[z], \frac{d}{dz})$ is connected reductive, and the spectrum of any solution algebra $S$ for 
$ M$ satisfies $\,\Spec\, S\cong Z_{C[z]}\,$ for some affine quasi-homogeneous $G$-variety $Z$ over $C$. 
    \smallskip   \item  Conversely, to any connected reductive group $G$ over $C$ and any affine quasi-homogeneous $G$-variety $Z$,  one can attach in a constructive way a semisimple differential module $M$ over $ C[z] $ with differential Galois group $G$, and a solution algebra $S$ for $M$ such that $\,\Spec\, S\cong Z_{C[z]}$.  \end{enumerate}   \end{thm}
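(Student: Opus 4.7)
The plan is to combine the anti-equivalence of Theorem \ref{T1.2} with two external inputs explicitly cited in the statement: the constructive inverse differential Galois theorem of \cite{MS} and the Raghunathan--Ramanan triviality \cite{RR} of reductive torsors over $C[z]$.

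For part (1), semisimplicity of $M$ renders the tannakian category $\langle M\rangle^{\otimes}$ semisimple, so its tannakian group $G$ is reductive. Connectedness follows from simple-connectedness of $\bA^1_C$ in characteristic $0$: if $G/G^\circ$ were non-trivial, the tannakian subcategory of $\langle M\rangle^{\otimes}$ on which $G^\circ$ acts trivially would have as Picard-Vessiot algebra a non-trivial finite \'etale $C[z]$-algebra, which is ruled out by simple-connectedness. Now that $G$ is connected reductive, \cite{RR} makes the Picard-Vessiot torsor $\Spec R' \to \Spec C[z]$ globally trivial. Applying Theorem \ref{T1.2}(7) to a solution algebra $S$ generated by a solution $v$ gives an \'etale-local identification $\Spec S \cong (\overline{G.v})_{C[z]}$, and because this identification is $G$-equivariantly produced from the torsor $\Spec R'$, the global trivialization of the latter upgrades it to a global isomorphism with $Z=\overline{G.v}$.

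For part (2), start from $(G,Z)$ as in the statement. Apply \cite{MS} to produce a differential module $M$ over $(C[z],d/dz)$ with differential Galois group $G$; reductiveness of $G$ forces $M$ to be semisimple. Fix a trivialization $\Spec R' \cong G_{C[z]}$ from \cite{RR}. Since $Z$ is affine with reductive group action, embed $Z\hookrightarrow V$ equivariantly into a finite-dimensional $G$-representation, and pick $v_0 \in V$ in the open orbit, so that $Z=\overline{G.v_0}$. Let $N \in \langle M\rangle^{\otimes}$ correspond to $V$ under the tannakian equivalence; the trivialization of the torsor turns $v_0$ into an $R\langle\d\rangle$-morphism $v: N \to R'$, and let $S\subset R'$ be the differential $R$-subalgebra generated by its image. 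Theorem \ref{T1.2}(3) yields $\Spec (S\otimes_R K')^\d = \overline{G.v_0} = Z$, and combining Theorem \ref{T1.2}(7) with the global triviality of $\Spec R'$ yields $\Spec S \cong Z_{C[z]}$, as required.

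The main obstacle is the \'etale-local to global upgrade used in both parts: Theorem \ref{T1.2}(7) is phrased \'etale-locally, and one must check that the bundle $\Spec S\to \Spec R$ is built from the torsor $\Spec R'\to \Spec R$ through the associated-bundle functor for the $G$-variety $Z$, so that any global trivialization of the torsor descends to a global isomorphism $\Spec S \cong Z_R$. A secondary subtlety, implicit in the word \emph{constructive} in part (2), is to keep track of effectivity at each step: of the realization of $G$ via \cite{MS}, of the trivialization of the torsor supplied by \cite{RR}, and of the equivariant embedding $Z \hookrightarrow V$; only together do they deliver an honest algorithm returning $(M,S)$ from $(G,Z)$.
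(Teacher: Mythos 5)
Your proposal is correct and follows essentially the same route as the paper: semisimplicity gives reductivity, connectedness follows from the absence of non-trivial finite connected torsors over $C[z]$, \cite{RR} trivializes the torsor of solutions, and \cite{MS} plus the anti-equivalence of Theorem \ref{T3.2} handles the converse. The only cosmetic difference is in extracting the global isomorphism $\Spec S\cong Z_{C[z]}$: the paper does not pass through the \'etale-local statement of Theorem \ref{T1.2}(7) and then upgrade, but uses the trivialization of $\Sigma$ directly to identify $\omega_{C[z]}$ with the forgetful functor $\vartheta$, so that $\omega(\sS)\otimes_C C[z]\cong S$ follows at once from the canonical isomorphism \eqref{iso} — which is precisely the associated-bundle description you correctly flag as the point needing verification.
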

   
     Using work by Arzhantsev and Timashev \cite{ArT2} on quasi-homogeneous varieties with infinitely many orbits, one can construct in this way solution algebras over $C[z]$ or $C(z)$ which {\it admit infinitely many quotients which are solution algebras} (\cf Remark \ref{RR3}): this occurs for any connected reductive differential Galois group $G$, taking for isotropy group $H$ the unipotent radical of any non-minimal parabolic subgroup of $G$. 
  
  On the other hand, the negative solution of Hilbert's XIV$^{th}$ problem provides observable subgroups $H$ which are not Grosshans, and one can construct in that way (\cf  \ref{PT1.2} (4)) {\it integrally closed solution algebras  $S$ over $ C[z] $ or $C(z)$  whose maximal localization $Q(S)\cap R'$ in the Picard-Vessiot algebra is not finitely generated}.
  
 The classification of solution algebras is an arduous task, even over $C[z]$ or $C(z)$: for instance, $C(z)$-algebras generated by polynomials in solutions of the Airy equation, and their derivatives,  correspond to affine quasi-homogeneous $SL(2)$-varieties; the normal ones are classified by discrete invariants, but the non-normal ones may form {\it continuous families} \cite{Ba}.

        \subsection{Homogeneous relations}   
 Let $S$ be a solution algebra generated by a solution $v: M\to S$. Then $v$ extends to a surjective homomorphism of differential rings $v^{\cdot}:{\rm{Sym}}^{\cdot} M \to S$.     
  Let ${\tilde S}$ be the quotient of $\, {\rm{Sym}}^{\cdot} M$ by the (differential) ideal generated by homogeneous relations with respect to $M$  in ${\rm{Ker}}\, v^{\cdot}$. 
      
\begin{thm}\label{T1.4}  \begin{enumerate} \item $S$ is homogeneous  (\ie $S= {\tilde S}$) if and only if there exist $g\in G$ and $\lambda\in C$, not a root of unity, such that $g.v=\lambda v$.

 \smallskip \noindent Assume that $R$ is finitely generated over $C$. Then

\smallskip   \item   $\,{\rm{Proj}}\,{\tilde S}$ is an algebraic fiber bundle over $\,{\rm{Spec}}\,R\,$ (locally trivial for the \'etale topology).  

\smallskip  \item $K$ is algebraically closed in $L\,\Leftrightarrow  \, $  all fibers of $\,\Spec   S $ are integral $\Rightarrow  \, $ all fibers of $\,{\rm{Proj}}\,{\tilde S}$ are integral. 
  \end{enumerate}   
     \end{thm}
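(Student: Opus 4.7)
The plan is to translate the three statements into the geometry of the orbit closure $\overline{G.v} \subset V := {\rm{Sol}}(M, K')$ and of its affine cone $C(\overline{G.v}) := \overline{\mathbb{G}_m \cdot \overline{G.v}}$, using the anti-equivalence of Theorem~\ref{T1.2}(3) and the local triviality of Theorem~\ref{T1.2}(7). Applying the Sol-functor $(-)_{K'}^{\partial}$ to the surjection $v^{\cdot}\colon {\rm Sym}^{\cdot}M \twoheadrightarrow \tilde S$ and taking $\Spec$, one identifies $\Spec(\tilde S_{K'})^{\partial}$ with the closed cone $C(\overline{G.v})$ inside $V$, and $\mathrm{Proj}(\tilde S_{K'})^{\partial}$ with the projective quasi-homogeneous $G$-variety $\overline{G.[v]} \subset \mathbb{P}(V)$.

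For (1), I would observe that $S=\tilde S$ says exactly that $\Ker(v^\cdot)$ is a homogeneous ideal, which dually means that $\overline{G.v}$ is $\mathbb{G}_m$-stable, i.e., a cone in $V$. Then I would introduce the closed subgroup scheme $\mathcal{F} := \{(g,t) \in G \times \mathbb{G}_m : g.v = tv\}$ and look at its image $F$ under the second projection. As a closed subgroup of $\mathbb{G}_m$, $F$ is either finite (contained in some $\mu_n$) or all of $\mathbb{G}_m$. The existence of $(g,\lambda)$ with $\lambda$ not a root of unity forces $F \not\subset \mu_n$ for every $n$, hence $F=\mathbb{G}_m$; then $\mathbb{G}_m\cdot v \subset G.v \subset \overline{G.v}$, so $\overline{G.v}$ is a cone. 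Conversely, if $\overline{G.v}$ is a cone then $\mathbb{G}_m\cdot v\subset \overline{G.v}$, and since $G.v$ is open dense in the irreducible $\overline{G.v}$ and contains $v$, its intersection with the irreducible one-dimensional variety $\mathbb{G}_m\cdot v$ is open dense, hence cofinite; thus $F$ is cofinite in $\mathbb{G}_m$, hence equals $\mathbb{G}_m$, and in particular contains non-roots of unity.

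For (2), the idea is to invoke Theorem~\ref{T1.2}(7), which étale-locally on $\Spec\,R$ trivializes the surjection ${\rm Sym}^{\cdot}M \twoheadrightarrow S$ as $\mathcal{O}(V)_{R'} \twoheadrightarrow \mathcal{O}(\overline{G.v})_{R'}$. Extracting the ideal generated by the homogeneous elements on both sides then yields $\tilde S \otimes_R R' \cong \mathcal{O}(C(\overline{G.v}))_{R'}$, and applying $\mathrm{Proj}$ gives an identification $\mathrm{Proj}\,\tilde S \times_R \Spec R' \cong (\overline{G.[v]})_{R'}$. This exhibits $\mathrm{Proj}\,\tilde S$ as an algebraic fiber bundle over $\Spec\,R$, étale-locally trivial with fiber the projective quasi-homogeneous $G$-variety $\overline{G.[v]}$.

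For (3), the strategy is to show that both sides of the first biconditional are equivalent to ``$\overline{G.v}$ is integral''. Since $\overline{G.v}$ is defined over the algebraically closed field $C$, integrality is equivalent to geometric integrality and amounts to $G.v$ being connected, i.e., to $\mathrm{Stab}_G(v)$ surjecting onto $\pi_0(G)$. The property ``$K$ algebraically closed in $L$'' is equivalent to geometric integrality over $K$ of the generic fiber $\Spec\,S_K$ (a standard fact in characteristic $0$, since $L={\rm Frac}(S_K)$); étale-local triviality from~(2) matches this with geometric integrality of $\overline{G.v}_K$, hence with integrality of $\overline{G.v}$. An analogous étale-descent argument applied fiber by fiber, using that integrality of a $C$-scheme is preserved under arbitrary field extensions of $C$, shows that all fibers of $\Spec\,S$ are integral iff $\overline{G.v}$ is. Finally, if $\overline{G.v}$ is integral, then $C(\overline{G.v})$ is the image of the irreducible $\mathbb{G}_m \times \overline{G.v}$, hence integral, whence $\overline{G.[v]}$ is integral, and the same étale-local trivialization makes all fibers of $\mathrm{Proj}\,\tilde S$ integral. (The converse implication fails because $\mathrm{Proj}$ may identify components of $\overline{G.v}$ lying on the same line through the origin, e.g.\ $\overline{G.v}=\{v,-v\}$.) The main obstacle I anticipate is the careful bookkeeping in (2)--(3) to ensure that the étale-local trivialization of Theorem~\ref{T1.2}(7) respects the grading on ${\rm Sym}^{\cdot}M$ and commutes with the cone and $\mathrm{Proj}$ constructions, and that integrality descends correctly through the étale base change in both the generic and closed fibers.
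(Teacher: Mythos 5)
Your plan is the paper's plan: everything is transported by $\omega$ to the orbit closure $\overline{G.v}$ and its cone, part (2) and the first equivalence of (3) are obtained from the \'etale-local trivialization of Theorem~\ref{T1.2}(7) (whose functoriality in $\sN$ is what makes it respect the grading), and in part (1) your group $F$ is exactly the paper's $\tilde H/H$, where $\tilde H$ is the stabilizer of the line $Cv$; your ``cone'' reformulation of $S=\tilde S$ correctly replaces the paper's argument via the commutative square and the induced $\mathbb{G}_m$-grading of $S$. One wording error in (1): you call $\overline{G.v}$ irreducible, whereas the paper explicitly warns (Remark following Theorem~\ref{T3.2}) that it may be reducible. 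Your argument survives because all it uses is that the orbit $G.v$ is open in $\overline{G.v}$ and meets the irreducible curve $\mathbb{G}_m\cdot v$ at $v$, so the intersection is cofinite in $\mathbb{G}_m\cdot v$; but the justification as written should be corrected.

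The one substantive gap is in the last implication of (3). You argue that $C(\overline{G.v})$ is integral as the closure of the image of the irreducible $\mathbb{G}_m\times\overline{G.v}$; this only controls the underlying reduced subscheme. The fiber of ${\rm{Proj}}\,\tilde S$ is ${\rm{Proj}}\,\omega(\tilde\sS)$, where $\omega(\tilde\sS)=C[V]/J$ and $J$ is the ideal generated by the homogeneous elements of $I(\overline{G.v})$; such an ideal need not be radical in general, and a non-reduced graded ring has non-reduced ${\rm{Proj}}$. What closes the gap is the graded structure $\omega(\tilde\sS)=\oplus\,\omega(\tilde\sS^i)$ with each piece embedded in $\omega(\sS)$ compatibly with products: when $\omega(\sS)$ is a domain (i.e.\ $\overline{G.v}$ integral, which is your hypothesis at that point), the polynomial-ring trick of \S\ref{su5} --- the one proving that $\tilde S$ is a domain because $S[t]$ is --- shows that $\omega(\tilde\sS)$ is itself a domain, hence ${\rm{Proj}}\,\omega(\tilde\sS)$ is integral. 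This is precisely the extra step the paper performs (it reruns the \S\ref{su5} argument for $\tilde S\otimes_K K_1$, using that $(S\otimes_K K_1)[t]$ is a domain). With that addition, and with the irreducibility wording fixed, your proof is complete and coincides in substance with the paper's.
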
  
 
\subsection{Relevance to transcendental number theory}\label{su1.6}

Let us consider a solution $y = \sum a_m z^m\in \bar\Q[[z]]$ of a linear differential equation $\phi(y)=0$ of order $n$ with coefficients in $R= \bar\Q[z, \frac{1}{T(z)}]$.   

\begin{cor}\label{C1.5} Assume that $\bar\Q(z)$ is algebraically closed in $\,\bar\Q(z, y,   \ldots, y^{(n-1)} ={\frac{d^{n-1}y}{dz^{n-1}}})$. Let $\xi\in \bar\Q^\ast$ be in the domain of convergence of $y$, and not a zero of the polynomial $\, T$.  
 
Assume that the transcendence degree (\resp homogeneous  transcendence  degree) of  $\bar\Q[y(\xi), , \ldots, y^{(n-1)}(\xi)]$ over $\bar\Q$ equals the  transcendence degree (\resp homogeneous  transcendence  degree) of  $\,\bar\Q(z)[  y,   \ldots, y^{(n-1)}] \,$  over $\,\bar\Q(z)$.

Then any polynomial relation (\resp homogeneous polynomial relation) with coefficients in $\bar\Q$ between $y(\xi), \ldots, y^{(n-1)}(\xi) $ is the specialization at $\xi$ of a polynomial relation (\resp homogeneous polynomial relation of the same degree) with coefficients in $\,R\,$ between $y , \ldots, y^{(n-1)}$.

In particular, if the functions $y,   \ldots, y^{(n-1)}$ are linearly independent over $\bar\Q(z)$, their values $y(\xi), \ldots, y^{(n-1)}(\xi) $ are linearly independent over $\bar\Q$.\end{cor}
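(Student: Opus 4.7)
The plan is to identify the data of the corollary with a solution algebra in the sense of Definition \ref{df2} and then read off the conclusion from Theorems \ref{T1.2}(7) and \ref{T1.4} at the $\bar\Q$-point $\xi$ of $\Spec R$. First I would take $R = \bar\Q[z, 1/T(z)]$ with $\d = d/dz$ (which is readily checked to be a simple differential ring with $R^\d = \bar\Q$), set $M = R\langle\d\rangle/R\langle\d\rangle\phi$, and let $v : M \to S := R[y, y', \ldots, y^{(n-1)}]$ send the class of $\d^i$ to $y^{(i)}$; the quotient field of $S$ is $L = \bar\Q(z, y, \ldots, y^{(n-1)})$, so $(S, v)$ is a solution algebra for $M$ in the sense of Definition \ref{df2}.

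The hypothesis that $\bar\Q(z)$ is algebraically closed in $L$ triggers Theorem \ref{T1.4}(3): every fiber of $\pi : \Spec S \to \Spec R$ is integral. Combined with Theorem \ref{T1.2}(7), which makes $\pi$ an \'etale-locally trivial algebraic fiber bundle, this forces every fiber to have the same dimension as the generic fiber, namely $d := {\rm tr.deg}_{\bar\Q(z)}\bar\Q(z)[y, y', \ldots, y^{(n-1)}]$. Now the evaluation map $y^{(i)} \mapsto y^{(i)}(\xi)$ induces a surjective $\bar\Q$-algebra homomorphism
$$\rho \;:\; S \otimes_R \bar\Q \;\twoheadrightarrow\; \bar\Q[y(\xi), \ldots, y^{(n-1)}(\xi)]$$
from the coordinate ring of the integral fiber $\pi^{-1}(\xi)$, of Krull dimension $d$, onto a subring of $\C$ whose Krull dimension equals its transcendence degree over $\bar\Q$. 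By hypothesis this transcendence degree is also $d$, so $\rho$ is a surjection between finitely generated integral domains of equal Krull dimension, hence an isomorphism (a nonzero prime kernel would strictly lower the dimension). Any $\bar\Q$-polynomial relation between the values $y^{(i)}(\xi)$ therefore lifts through $\rho^{-1}$ to an element of $S \otimes_R \bar\Q$, which by construction is the $z = \xi$ specialization of an $R$-polynomial relation between the functions $y, \ldots, y^{(n-1)}$.

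For the homogeneous variant I would run the parallel argument with the graded ring $\tilde S$ and the projective fibration ${\rm Proj}\,\tilde S \to \Spec R$ in place of $\Spec S \to \Spec R$, invoking Theorem \ref{T1.4}(2) for the fiber bundle property and Theorem \ref{T1.4}(3) for integrality of fibers; the homogeneous-transcendence-degree hypothesis then provides the dimension-matching that makes the analogue of $\rho$ an isomorphism of graded integral $\bar\Q$-algebras. The concluding linear-independence assertion is the degree-$1$ instance of the homogeneous conclusion: a nontrivial $\bar\Q$-linear relation among the $y^{(i)}(\xi)$ would lift to a nontrivial $R$-linear, hence $\bar\Q(z)$-linear, relation among $y, \ldots, y^{(n-1)}$, contradicting the assumed linear independence.

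The only delicate point in the corollary itself is the dimension-matching step---translating the transcendence-degree hypothesis into an equality of Krull dimensions on the two sides of $\rho$, so that surjectivity forces injectivity. All the heavy lifting, namely integrality of fibers and the \'etale-local triviality as a fiber bundle, is packaged into the structural theorems already proved.
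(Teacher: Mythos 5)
Your proposal is correct and takes essentially the same route as the paper: integrality of the fiber at $\xi$ (Theorem \ref{T1.4}(3)) together with the dimension match supplied by the transcendence-degree hypothesis forces the locus of $\bar\Q$-relations among the values to coincide with the fiber of $\Spec S$ (resp.\ ${\rm Proj}\,\tilde S$), the paper phrasing this as an equality of integral subvarieties where you phrase it as your surjection $\rho$ being an isomorphism. Your explicit appeal to Theorem \ref{T1.2}(7) to identify the fiber dimension with the generic one only makes precise a step the paper leaves implicit.
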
 

Indeed, since $\bar\Q(z)$ is algebraically closed in the solution field $L= \bar\Q(z, y,   \ldots, y^{(n-1)})$, the fiber of   $\,\Spec { S}$ (\resp $\,{\rm{Proj}}\,{\tilde S}$) at $\xi$ is integral according to \ref{T1.4} (3).  It contains the affine (\resp projective) variety defined by the (\resp homogeneous) polynomial relations with coefficients in $\bar\Q$ beween $y(\xi), \ldots, y^{(n-1)}(\xi)$. Hence these $\bar\Q$-varieties coincide if they have the same dimension.

\smallskip The assumptions of the corollary are notably satisfied when $y$ is an $E$-function (for instance $y = \sin z$), or more generally an arithmetic Gevrey series of negative rational order $s$ \cite{A2}, \ie when the absolute logarithmic height of $(a_1. 1!^{-s},\ldots, a_m.m!^{-s})$  grows at most linearly with $m$.  In that case, $L$ consists of meromorphic functions on $\C$, hence $\bar\Q(z)$ is algebraically closed in $L$, and the condition about transcendence degrees is essentially the classical Siegel-Shidlovsky theorem, which can be also derived rather directly from the fact (proven in \cite{A1}) that differential operators $\phi$ of minimal order annihilating such series $y$ have no non trivial singularities at finite distance. 

In \cite{Be}, Beukers uses this fact to deduce, for $E$-functions, the conclusion of the above corollary from the Siegel-Shidlovsky theorem (answering an old question of Lang \cite[p. 100]{La}). However, as we have seen (\cf also \ref{R6.1}), such a deduction actually follows from general results of (generalized) differential Galois theory, independently of \cite{A1}.

\bigskip  
   \section{Generalized Picard-Vessiot theory. A reminder and some complements to \cite{A3}}     
 \subsection{}  In order to extend the scope of our results and cover the case of simultaneous action of several derivations, and connections on higher dimensional varieties, we shall work with generalized differential rings as in \cite{A3}, which keeps the spirit of classical differential algebra.
 
 \smallskip  Let $\sR =(R, \; d: R\to \Omega)$ be a {\it generalized differential ring}, \ie the data of a commutative ring $R$ and a derivation $d: R\to \Omega$ to a $R$-module $\Omega$, which we always assume to be {\it projective of finite rank}  (the classical notion of differential ring corresponds to the case $\Omega = R$).
       We denote by $C= {\rm{Ker}}\, d\,$ the ring of constants.

  \smallskip An {\it extension} $\sS/\sR$ consists of a ring extension $S/R$ together with an extension $S\to \Omega\otimes_R S$ of the derivation $d$.
  
  A {\it differential module} $\sM =(M, \nabla)$ over $\sR$ is an $R$-module $M$ with a connection $\nabla$, \ie an additive map $M\to M\otimes_R \Omega$ satisfying the Leibniz rule. We write $\sM^\nabla$ for the kernel of $\nabla\,$ (a $C$-module).
   
   A {\it differential ideal} $\sI$ is a differential submodule of $\sR$ (equivalently, the data of an ideal $I$ of $R$ such that $\langle  \Omega^\vee, dI\rangle \subset I$. 
   
   One says that 
  $\sR$ is {\it simple} if it has no non-zero proper differential ideal. 
  
\begin{exs} If $X$ is an affine smooth geometrically connected variety over a field $C$ and $\Omega= \Gamma(X, \Omega^1_{X/C})$, then $(\mathcal O(X),d)$ is a simple differential ring.
 
 Local rings of complex analytic manifolds are simple differential rings.\end{exs}
  
  \begin{lemma}\label{simple}    Let us assume that $\sR$ is simple. Then  \begin{enumerate}  
  \item $C$ is a field. 
 
 \medskip\noindent Assume that $\car C=0$. Then
 
 \smallskip  \item  $R$ is an integral domain.
 
    \smallskip\item  There is a unique extension of $d$ to the quotient field $K$ of $R$ which defines a differential extension $\sK/\sR$, with constant ring $C$. 
  
 \end{enumerate}        \end{lemma}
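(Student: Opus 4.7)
For (1), the plan is to use the Leibniz rule: if $c\in C$ is nonzero, then $d(cr)=c\,dr$ for every $r\in R$, so the principal ideal $(c)\subset R$ is a differential ideal of $\sR$. Simplicity forces $(c)=R$, hence $c$ is a unit; differentiating $1=c\cdot c^{-1}$ yields $d(c^{-1})=-c^{-2}dc=0$, so $c^{-1}\in C$ and $C$ is a field.

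For (2), I would show $R$ is a domain by proving that $(0)$ is prime. Observe first that an ideal of $R$ is differential with respect to $d:R\to\Omega$ if and only if it is stable under every ordinary derivation $\partial_{\omega^\vee}:=\langle\omega^\vee,d(\cdot)\rangle$, $\omega^\vee\in\Omega^\vee$; hence the classical theorem of Ritt-Raudenbush-Keigher in characteristic zero, which asserts that the minimal primes over a differential ideal are themselves differential (the core step being that the nilradical is differential, via the identity $\partial^n(a^n)=n!(\partial a)^n+\cdots\in(a)$), applies in the generalized setting. Since $(0)$ is a differential ideal, each minimal prime of $R$ is differential and, by simplicity, equals $(0)$; thus $(0)$ is prime and $R$ is an integral domain.

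For (3), the plan is to extend $d$ to $K$ by the quotient rule $d(a/b)=(b\,da-a\,db)/b^2\in\Omega\otimes_R K$; well-definedness is routine, and by Leibniz this is the unique such extension. The substantive point is that no new constants arise. Given $f\in K$ with $df=0$, form the ideal $I:=\{r\in R:rf\in R\}$, which is nonzero since it contains any denominator of $f$. For $r\in I$ and $\omega^\vee\in\Omega^\vee$, the Leibniz identity together with $df=0$ gives $\partial_{\omega^\vee}(rf)=\partial_{\omega^\vee}(r)\cdot f$; since $rf\in R$, the left-hand side lies in $R$, so $\partial_{\omega^\vee}(r)\in I$. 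Hence $I$ is a nonzero differential ideal of $\sR$, and simplicity forces $I=R$. Thus $f\in R$, and $df=0$ then gives $f\in R^d=C$.

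The main obstacle is (2): it relies on the nontrivial input that in characteristic zero the minimal primes over a differential ideal are differential (equivalently, that the nilradical of a differential ring is differential). The passage from ordinary to generalized differential rings adds only the observation that differentiality can be tested through the family $\{\partial_{\omega^\vee}\}_{\omega^\vee\in\Omega^\vee}$ of ordinary derivations, so the classical result applies without substantial modification. Parts (1) and (3) are by comparison short exercises in the Leibniz rule and simplicity, with the key move in each being the construction of an auxiliary nonzero differential ideal to which the hypothesis of simplicity can be applied.
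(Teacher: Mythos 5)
Your proof is correct, but it does not follow the paper's own route. For items (1) and (3) the paper simply refers to \cite[2.1.3.5]{A3}, so your direct arguments fill in what the paper outsources: the principal differential ideal $(c)$ for (1), and the conductor ideal $I=\{r\in R: rf\in R\}$ for (3), are both legitimate nonzero differential ideals to which simplicity applies, and your preliminary remark that ``differential ideal'' means stability under all the derivations $\partial_{\omega^\vee}=\langle\omega^\vee,d(\cdot)\rangle$ is literally the paper's definition (using that $\Omega$ is projective of finite rank), so nothing is lost in the generalized setting. For item (2) the paper takes a different, more self-contained path, adapting \cite[Lemma 1.17]{PS}: one first proves that every zero-divisor $a$ is nilpotent by applying simplicity to the differential ideal $\{b\in R: a^mb=0 \text{ for some } m\}$, and then that the nilradical is differential because $\partial a$ is a zero-divisor whenever $a$ is nilpotent (this is where $\car C=0$ enters, via $na^{n-1}\partial a=0$ with $na^{n-1}\neq 0$); simplicity then kills the nilradical and hence all zero-divisors. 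Your version instead establishes that the nilradical is differential via $\partial^n(a^n)\equiv n!(\partial a)^n \pmod{(a)}$ and then invokes the classical fact that minimal primes over a (radical) differential ideal are differential, so that simplicity forces $(0)$ to be prime. Both arguments are valid and use $\car C=0$ and simplicity in essentially the same places; the paper's avoids the minimal-prime lemma entirely, while yours leans on that standard but heavier input and in exchange skips the ``zero-divisors are nilpotent'' step.
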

  
  \begin{proof} For items (1) and (3), see \cite[2.1.3.5]{A3}.    
  The proof of (2) given in \cite[Lemma 1.17]{PS} in the case $\Omega=A$ extends to the general case: one first shows that every zero-divisor $a\in R$ is nilpotent (considering the differential ideal of elements $b$ such that $a^m b=0$ for some $m$); then that the nilradical of $R$ is a differential ideal (the image by any $\d\in \Omega^\vee$ of a nilpotent element is a zero-divisor).   \end{proof}
  
  \begin{lemma}\label{L2.1} Let  $\sM=(M,\nabla)$ be a differential module over a simple differential ring $\sR$. Then the natural morphism $M^\nabla\otimes_C R\to M$ is injective.
   \end{lemma}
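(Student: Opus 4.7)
The plan is to run the classical ``shortest relation'' argument, adapted to the setting where the derivation takes values in a projective $R$-module $\Omega$ rather than in $R$ itself. Suppose that $M^\nabla \otimes_C R \to M$ has a non-zero kernel, and choose an element of the kernel of minimal length $n$, written $\sum_{i=1}^n r_i \otimes m_i$. After a $C$-linear change of basis on the $m_i$ side, I may assume $m_1,\dots,m_n \in M^\nabla$ are $C$-linearly independent, and by minimality of $n$ each $r_i$ is non-zero. In particular $r_1\ne 0$. Thus $\sum_{i=1}^n r_i m_i = 0$ in $M$ while no shorter such relation with $C$-linearly independent horizontal coefficients exists.

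Next I consider the subset
\[
I_1 \;=\; \bigl\{\, a\in R \;:\; \exists\, a_2,\dots,a_n\in R \text{ with } a\,m_1 + a_2 m_2+\cdots + a_n m_n = 0\,\bigr\}.
\]
This is clearly an ideal of $R$, and it is non-zero since it contains $r_1$. The key point is that $I_1$ is a differential ideal. Given $\delta\in \Omega^\vee$, let $\nabla_\delta : M\to M$ be the composition of $\nabla$ with $\mathrm{id}_M\otimes \delta$; since each $m_i$ is horizontal, $\nabla_\delta$ commutes with multiplication by $m_i$ in the Leibniz rule, so applying $\nabla_\delta$ to any relation $a\,m_1+\sum_{i\ge 2} a_i m_i = 0$ yields $\delta(a)\,m_1+\sum_{i\ge 2}\delta(a_i)m_i = 0$, whence $\delta(a)\in I_1$. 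This says $\langle\Omega^\vee,dI_1\rangle\subset I_1$, so $I_1$ is differential. By simplicity of $\sR$, $I_1 = R$; in particular $1\in I_1$, so there exist $b_2,\dots,b_n\in R$ with
\[
m_1 + b_2 m_2 + \cdots + b_n m_n = 0.
\]

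Applying $\nabla_\delta$ again produces the shorter relation $\delta(b_2) m_2 + \cdots + \delta(b_n) m_n = 0$. By the minimality of $n$ (and the $C$-linear independence of $m_2,\dots,m_n$), this shorter relation must be trivial, i.e.\ $\delta(b_i) = 0$ for every $\delta\in\Omega^\vee$ and every $i\ge 2$. This is the one place where the hypothesis on $\Omega$ really enters, and I expect it to be the only subtle step: since $\Omega$ is assumed to be projective of finite rank, the evaluation map $\Omega\to \Omega^{\vee\vee}$ is an isomorphism, so $\delta(db_i)=0$ for all $\delta\in\Omega^\vee$ forces $db_i=0$. Hence $b_i\in C$ for all $i\ge 2$, and the relation $m_1+\sum_{i\ge 2} b_i m_i=0$ is a non-trivial $C$-linear relation among $m_1,\dots,m_n$, contradicting their $C$-linear independence. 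This contradiction proves injectivity.

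The only genuine difficulty in the argument is replacing the classical fact ``if every derivation of $R$ kills $b$, then $b\in C$'' by its analogue for a projective module of $1$-forms; everything else is a direct transcription of the standard Put--Singer style proof.
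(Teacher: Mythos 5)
Your proof is correct: the paper offers no argument of its own for this lemma (it only points to \cite[3.1.2.1]{A3}), and your minimal-length-relation argument is precisely the standard van der Put--Singer proof transposed to the generalized setting, with the one genuinely new ingredient correctly identified --- reflexivity of the finitely generated projective module $\Omega$ is what lets you pass from $\langle\delta, db_i\rangle=0$ for all $\delta\in\Omega^\vee$ to $db_i=0$, hence $b_i\in C$. The only step you use silently is that $C$ is a field (Lemma \ref{simple}(1), a consequence of simplicity), which is what legitimizes the $C$-linear-independence bookkeeping throughout.
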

   \begin{proof}   \cf \cite[3.1.2.1]{A3}.
  \end{proof}

\begin{cor}\label{C1.3} For any field extension $C'/C$, $\sR_{C'}$ is simple.
\end{cor}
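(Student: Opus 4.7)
The plan is to show that every nonzero differential ideal $\sI \subset \sR_{C'} = (R\otimes_C C', d\otimes 1)$ contains $1$, using a classical ``minimal-length element'' argument adapted to the generalized differential setting in which $\Omega$ is a projective $R$-module of finite rank rather than $R$ itself.

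As a preliminary reduction, I would observe that $\sI\cap R$ (with $R$ embedded via $r\mapsto r\otimes 1$) is a differential ideal of $\sR$, since $d(r\otimes 1)=dr\otimes 1$. By simplicity of $\sR$, one may therefore assume $\sI\cap R=0$ and aim for a contradiction from the assumption $\sI\neq 0$.

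Next, fixing a $C$-basis $(c_\alpha)$ of $C'$, I would choose $0\neq a\in\sI$ minimising the number $n$ of nonzero coefficients in its expansion $a=\sum_{i=1}^n r_i\otimes c_i$ with $r_i\in R\setminus\{0\}$. For any $\delta\in\Omega^\vee$, the $R$-linear map $\delta\otimes 1_{C'}\colon \Omega\otimes_C C'\to R\otimes_C C'$ is a $\sR_{C'}$-linear form on $\Omega_{\sR_{C'}}$, so $\langle\delta\otimes 1, d_{C'} a\rangle = \sum\delta(r_i)\otimes c_i$ again lies in $\sI$. The combination
\[
(r_1\otimes 1)\,\langle\delta\otimes 1, d_{C'} a\rangle - (\delta(r_1)\otimes 1)\,a \;=\; \sum_{i\geq 2}\bigl(r_1\delta(r_i)-\delta(r_1)r_i\bigr)\otimes c_i
\]
then lies in $\sI$ and has at most $n-1$ nonzero terms; by minimality it vanishes, yielding the relations $r_1\delta(r_i)=\delta(r_1)r_i$ in $R$ for all $i$ and all $\delta\in\Omega^\vee$.

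The endgame invokes Lemma \ref{simple}: $R$ is a domain with fraction field $K$ whose constants are $C$, and $d$ extends to $K$. The relations above say precisely that $\delta(d(r_i/r_1))=0$ in $K$ for every $\delta\in\Omega^\vee$, so projectivity of $\Omega$ forces $d(r_i/r_1)=0$, and hence $\gamma_i := r_i/r_1\in C$. Setting $c:=\sum\gamma_i c_i\in C'^{\times}$, we get $a=(r_1\otimes 1)(1\otimes c)$; multiplication by $1\otimes c^{-1}\in\sR_{C'}$ places $r_1\otimes 1$ in $\sI\cap R$, contradicting the reduction. The only subtle step is this minimal-element manipulation: it requires carefully quantifying over $\delta\in\Omega^\vee$ so that the argument reduces to the classical case $\Omega=R$, and invoking projectivity at the end to pass from ``annihilated by every $\delta$'' to ``actually zero''.
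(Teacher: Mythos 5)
Your proof is correct, but it follows a genuinely different route from the paper's. The paper gets the corollary in two lines from Lemma \ref{L2.1}: for a proper differential ideal $\sI$, set $\sM=\sR_{C'}/\sI$; the image of $C'$ lies in $\sM^\nabla$, so the projection $\sR_{C'}\to\sM$ factors as $\sR_{C'}\inj \sM^\nabla\otimes_C \sR\to\sM$, and the second arrow is injective by that lemma, whence $\sI=0$. You instead run the classical ``element with a minimal number of terms'' argument on $\sI$ itself, which in effect re-proves the special case of Lemma \ref{L2.1} needed here; your adaptation to the generalized setting is sound, since you quantify over $\delta\in\Omega^\vee$ and then use that a finite projective module injects into its double dual to pass from $\langle\delta,\,r_1\,dr_i-r_i\,dr_1\rangle=0$ for all $\delta$ to $d(r_i/r_1)=0$ in $\Omega_K$. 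The trade-off: your argument invokes Lemma \ref{simple}(2)--(3) ($R$ a domain, $d$ extending to $K$ with constant field $C$), hence tacitly assumes $\car C=0$, which the paper's deduction does not need at this point (though it is the standing hypothesis wherever the corollary is used); in exchange, yours is self-contained and does not presuppose the Wronskian-type Lemma \ref{L2.1}. One small point in your initial reduction: state explicitly that $\sI\cap R\neq 0$ forces $1\in\sI$ by simplicity of $\sR$, so that properness of $\sI$ indeed yields $\sI\cap R=0$ --- immediate, but worth saying.
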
  
\begin{proof}
 Let $\sI \subset \sR_{C'}$ be a proper differential ideal, and let $\sM= \sR_{C'}/\sI$. Then  $\sM^\nabla$ contains $C'$, and the natural projection $\sR_{C'}\to \sM$ can be written as the composition
 $\sR_{C'}\inj M^\nabla\otimes_C \sR \to \sM $, and is injective by the previous lemma, whence  $ \sI= 0$.\end{proof}

   \subsection{}\label{su2.2} In algebraic geometry, it is well-known that coherent modules with integrable connection over a smooth basis are locally free. It is less known that the integrability condition is superfluous. An abstract explanation is provided by the following theorem.
   
 \smallskip We assume henceforth that $\sR$ is {\it simple} and $\car C=0$, and denote by $\sK=(K,d)$ its quotient field (considered as a differential extension of $\sR$). 
   
       \begin{thm}\label{T2.1}  Let $\sM$ be a differential module over $\sR$.
      Assume that the underlying $R$-module $M$ is finitely generated.     \begin{enumerate}  \item Then $M$ is projective. The same holds for any subquotient of $\sM$. 
   \smallskip \item  The finite direct sums of tensor products $\sM^{\otimes i}\otimes (\sM^\vee)^{\otimes j}$ and their subquotient differential modules form a tannakian category $\langle \sM\rangle^{\otimes}$ over $C$, and the natural $\otimes$-functor $\langle \sM\rangle^{\otimes} \to \langle \sM_\sK\rangle^{\otimes}$ is an equivalence. 
  \end{enumerate}     \end{thm}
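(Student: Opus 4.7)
The plan is first to establish (1) by a Fitting-ideal argument exploiting the simplicity of $\sR$, and then to deduce (2) by assembling the tannakian structure and checking generic-fiber equivalence.

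For (1), I start from the observation that each $\delta\in\Omega^\vee$ gives an additive endomorphism $\d_\delta\colon M\to M$ with Leibniz rule $\d_\delta(rm)=\delta(r)m+r\d_\delta(m)$. Picking generators $e_1,\dots,e_n$ of $M$, any relation $\sum r_i e_i=0$ differentiates, via the connection matrix $\nabla(e_i)=\sum_j e_j\otimes\omega_{ji}$, into another relation whose coefficients are $\delta(r_i)$ modulo $R$-linear combinations of the $r_j$. From this one reads off that each Fitting ideal $F_k(M)\subset R$ is a differential ideal of $\sR$. Simplicity forces $F_k(M)\in\{0,R\}$, and if $r$ is minimal with $F_r(M)=R$ then $M$ is locally free of constant rank $r$ and finitely presented, hence projective.

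The subquotient half of (1) reduces to showing that an arbitrary differential submodule $N\subset M$ is finitely generated, after which the Fitting argument applies to it (and to any quotient of two nested differential submodules). I would choose a $K$-basis $\bar n_1,\dots,\bar n_d$ of $N_\sK\subset M_\sK$, lift to $n_i\in N$, and clear denominators so that $\d_\delta n_i$ lies in $s^{-1}(Rn_1+\cdots+Rn_d)$ for a single $s\in R$ and all $\delta$ in a finite $R$-generating set of $\Omega^\vee$. An induction on derivation order then shows that the differential submodule $N'\subset N$ generated by the $n_i$ is itself finitely generated. The quotient $N/N'$ is a finitely generated differential module with trivial generic fiber. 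A preparatory lemma, proved directly, gives $N/N'=0$: for any finitely generated differential module $Q$ with $Q_\sK=0$, the ideal $\operatorname{Ann}_R(Q)$ is non-zero and, by applying Leibniz to $rm=0$ for all $m\in Q$, is a differential ideal; simplicity forces it to equal $R$, so $Q=0$. Thus $N=N'$ is finitely generated and projective.

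For (2), by (1), $\langle\sM\rangle^\otimes$ consists of finitely generated projective $R$-modules with connection; it is $C$-linear abelian (kernels and cokernels exist and are subquotients, hence projective by (1)), closed under $\otimes$ and $\vee$, with $\End(\un)=\sR^\nabla=C$ by Lemma~\ref{simple}(1), and rigid by projectivity. A $C$-linear fiber functor is furnished by $\omega(\sN)=(\sN\otimes_\sR\sR')^\nabla$ for a Picard-Vessiot extension $\sR'/\sR$ produced by the generalized formalism of \cite{A3}, making $\langle\sM\rangle^\otimes$ neutral tannakian over $C$. The natural functor to $\langle\sM_\sK\rangle^\otimes$ is exact and $\otimes$-compatible; full faithfulness reduces, via projectivity and Lemma~\ref{L2.1}, to the statement that horizontal $K$-linear maps between generic fibers descend uniquely to horizontal $R$-linear maps. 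For essential surjectivity, a subquotient $P_\sK/Q_\sK$ of a tensor construction $T_\sK$ is lifted by taking $P=P_\sK\cap T$, $Q=Q_\sK\cap T$, which are differential $R$-submodules of $T$, finitely generated projective by the subquotient part of (1), with $P_\sK/Q_\sK$ as generic fiber.

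The main obstacle is the finite generation of an arbitrary differential submodule $N\subset M$: without a Noetherian hypothesis on $R$, one cannot appeal to general module-theoretic principles, and the argument must combine a precise denominator analysis for iterated $\d_\delta$'s with the simplicity-driven torsion-vanishing lemma. Once this bridge from the Fitting-ideal argument on $M$ itself to arbitrary subquotients is in place, both the projectivity assertion and the tannakian assembly proceed by standard categorical manipulations and the Picard-Vessiot formalism of \cite{A3}.
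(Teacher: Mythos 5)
Your Fitting-ideal argument for the projectivity of $M$ itself is a legitimate alternative to the paper's route (the paper instead shows $\Hom(\sM,\sN)\to\Hom(\sM_\sK,\sN_\sK)$ is bijective, descends the coevaluation $\eta_K:\sK\to\sM_\sK\otimes\sM_\sK^\vee$ to $\sR$, and invokes dualizability). But your bridge to subquotients has a genuine gap, precisely at the point you flag as the main obstacle. First, the differential submodule $N'$ generated by $n_1,\dots,n_d$ is \emph{not} shown to be finitely generated: writing $\nabla_\delta n_i=\sum_j (a_{ij}/s)\,n_j$, the $k$-fold iterated derivatives land only in $s^{-k}(Rn_1+\cdots+Rn_d)$, and the powers of $s$ in the denominator grow without bound, so your induction on derivation order produces an increasing union $\bigcup_k s^{-k}(\sum_j Rn_j)$ with no reason to stabilize when $R$ is not Noetherian (rescaling the $n_i$ by powers of $s$ does not help, since the Leibniz term reintroduces $\delta(s)/s$). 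Second, even granting $N'$ finitely generated with $N'_\sK=N_\sK$, your torsion-vanishing lemma needs $\operatorname{Ann}_R(N/N')\neq 0$, which you extract from finite generation of $N/N'$; but $N/N'$ is finitely generated only if $N$ is, which is exactly what you are trying to prove, and a non-finitely-generated torsion module can have zero annihilator. So the lemma does not apply.

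The paper sidesteps all of this by reversing the order: a \emph{quotient} of $\sM$ is automatically finitely generated, hence projective by the main argument; a differential submodule $N\subset M$ is then the kernel of $M\to M/N$ with projective cokernel, so the sequence splits and $N$ is a direct summand of $M$, hence finitely generated projective with no further work. You should adopt this. Two smaller points on (2): the paper takes the forgetful functor $\vartheta:\langle\sM\rangle^{\otimes}\to\mathrm{Proj}_R$ as its fiber functor, whereas your $\omega=(-\otimes_\sR\sR')^\nabla$ both presupposes $C$ algebraically closed (an assumption introduced only after this theorem) and risks circularity, since in \cite{A3} the Picard--Vessiot algebra is \emph{constructed from} a fiber functor on the already-tannakian category. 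Also, full faithfulness of $\langle\sM\rangle^{\otimes}\to\langle\sM_\sK\rangle^{\otimes}$ does not ``reduce to'' descent of horizontal maps --- it \emph{is} that descent; the proof is your preparatory lemma applied to $f(\sM)/(f(\sM)\cap\sN)$ for $f\in\Hom(\sM_\sK,\sN_\sK)$, and you should say so explicitly.
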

   
     \begin{proof}    (1)    $M$ is an $R$-lattice in the vector space $M_K$ in the sense of Bourbaki \cite[VII.4.1]{Bou}, \ie a sub-$R$-module which spans $M_K$ and is contained in a finitely generated $R$-submodule.
    According to \loccit, for any $R$-lattice $N$, $M\otimes_R N$ is a lattice in $M_K\otimes_K N_K$ and $\Hom_R(M,N)$ is a lattice in $Hom_K(M_K,N_K)$ (in particular the dual $M^\vee$ is a lattice in $(M_K)^\vee$). 
    
    It follows that if $\sN$ is another differential module, of finite type over $R$ (or more generally such that $N$ is a lattice in $N_K$), the natural $C$-linear map $\Hom (\sM, \sN)\to \Hom(\sM_\sK, \sN_\sK)$ is injective. It is surjective as well: if $f\in \Hom(\sM_\sK, \sN_\sK)$, $f(\sM)$ is an $\sR$-differential submodule of $\sN_\sK$ and the quotient $f(\sM)/(f(\sM)\cap \sN)$ is an $\sR$-differential module, finitely generated and torsion over $R$, Its annihilator is a non-zero differential ideal in $R$. Since $\sR$ is simple, we conclude that $f(\sM)/(f(\sM)\cap \sN)=0$, hence $f\in  \Hom (\sM, \sN)$.

    In particular the canonical coevaluation morphism $\eta_K: \,\sK \to \sM_\sK \otimes \sM_\sK^\vee$ comes from a coevaluation morphism $\eta:\, \sR\to \sM \otimes \sM^\vee.$ On the other hand, one has the evaluation morphism $\varepsilon:\,\sM^\vee \otimes \sM \to  \sR$, and the equation 
    $\, (1_\sM\otimes \varepsilon)\circ (\eta\otimes 1_\sM)=1_\sM\,$ holds since it holds after tensoring with $\sK$, taking into account the previous observation.  This shows that $M$ is projective.
    
    Any quotient of $\sM$ is again finitely generated over $R$, hence projective. And so is any subobject, viewed as the kernel of a quotient morphism.  
    
 \smallskip   (2)  The finite direct sums of tensor products $M^{\otimes i}\otimes (M^\vee)^{\otimes j}$ and their subquotient differential modules form an abelian $C$-linear $\otimes$-category $\langle \sM\rangle^{\otimes}$ with unit $\sR$, and $\End\, \sR= C$. By item (1), this is a rigid $\otimes$-category. The forgetful functor 
    $$\vartheta:  \langle \sM\rangle \to {\rm{Proj}}_R,\;\, \sN \mapsto N$$ is a fiber functor.  Hence $\langle \sM\rangle^{\otimes}$ is tannakian over $C$. We have already shown that the $\otimes$-functor  $\langle \sM\rangle^{\otimes} \to \langle \sM_\sK\rangle^{\otimes}$ is fully faithful.  It is essentially surjective because given $\sN\in \langle \sM\rangle^{\otimes}$,  every subobject $\sP$ in $\langle \sM_\sK\rangle^{\otimes}$ of $\sN_K$ comes  from a subobject of $\sN$ (with underlying $R$-module $ N\cap  P$). \end{proof}

      \subsection{ }\label{su2.3} We assume henceforth that $C$ is {\it algebraically closed of characteristic $0$}. It follows that $\langle \sM\rangle^{\otimes} $ admits a fiber functor $$\omega:  \langle \sM \rangle^{\otimes} \to {\rm{Vec}}_C ,$$  which  is unique up to non-unique isomorphism (if $R$ is finitely generated over $C$, one may take $ \omega = \vartheta_x =$ the fiber at any closed point $x$ of $\Spec\,R$, \ie the reduction modulo any maximal ideal of $R$). 
      
      The automorphism group scheme of $\omega$ is the {\it differential Galois group of $\sM$} (``pointed at $\omega$")
       $$G =  {\rm{Gal}}\,(\sM,\omega) = {\bf{Aut}}^\otimes\,\omega,$$
    a closed subgroup of $GL(\omega(\sM))$, and one has equivalences of tannakian categories  $\langle \sM \rangle^{\otimes}\cong \langle \sM_\sK\rangle^{\otimes} \cong {\rm{Rep}}\, G$. In particular, $\sM$ is semisimple if and only if the faithful $G$-module is semisimple, which is equivalent to: $G$ is reductive (since $\car C=0$).

    The isomorphism scheme $$\Sigma  =  {\bf{Iso}}^\otimes\,(\omega\otimes_C R,\vartheta) $$ is a torsor under the right action of $G_R$ (the {\it torsor of solutions} of $\sM$). 
     
      \subsection{ }\label{su2.4} A {\it solution} of $\sM$ in a differential extension $\sS/\sR$ is a morphism of differential modules $ \sM \stackrel{v}{\to}  \sS$ over  $\sR$. Since $M$ is projective of finite rank, this is the same as an element $\,v\in (\sM^\vee\otimes_\sR\sS)^\nabla$.   
      
    We say that $\sM$ is {\it solvable} in $\sS$ if the solutions of $\sM$ in $\sS$ generate $Hom_R(M,S)$ over $S$. Assume that $\sS$ is simple with constant field $C'$. Then, by Lemma \ref{L2.1}, $\sM$ is solvable in $\sS$ if and only if $(\sM^\vee_\sS)^\nabla\otimes_{C'} \sS \cong \sM^\vee_\sS$.
          If moreover $S$ is faithfully flat over $R$,  and both $\sM$ and $\sM^\vee$ are solvable in $\sS$ (equivalently:  $\sM$ and $(\det M)^\vee$ are solvable in $\sS$), then any $\sN \in  \langle \sM\rangle^{\otimes} $ is solvable in $\sS$ and  $ \omega_{\sS}:= (- \otimes_{\sR} \sS)^\nabla$ is a fiber functor on $\langle \sM\rangle^{\otimes} $ with values in ${\rm{Vec}}_{C'}$ (\cf \cite[3.1.3.2]{A3}).

  \smallskip    A {\it Picard-Vessiot algebra} $\,\sR'\,$ for $\sM$ is a faithfully flat simple differential extension of $\sR$ with constant field $C$ in which $\sM$ and $\sM^\vee$ are solvable, and which is minimal for these properties (which amounts to saying that $S$ is generated by $\langle M, \omega(\sM^\vee)\rangle$ and $\langle M^\vee, \omega(\sM)\rangle$). 
       
      Starting with a fiber functor $\omega$, there is a canonical structure of differential ring on $\sO(\Sigma)$ which makes it a Picard-Vessiot  algebra for $\sM$, and $\omega$ is canonically isomorphic to $\omega_{\sR'}$ (\cf \cite[3.4.2.1]{A3}). Any Picard-Vessiot  algebra for $\sM$ arises in this way up to isomorphism. One has 
      $$G = {\bf{Aut}}\,\sR'/\sR,$$ an equality compatible with the $G$-action on $\omega(\sM)$ in the pairing $ M^\vee \otimes_C \omega(\sM) \to  R'$.  For all this, we refer to \cite[\S 3.2, 3.4]{A3}.
      
      \begin{rem}\label{RR1} It is worth pointing out that we haven't assumed any finiteness condition on $R$, {\it nor any integrability condition on $\sM$}. At first, it might seem strange that a non-integrable connection is  solvable in some differential extension $\sR'/\sR$. This is discussed in detail in \cite[3.1.3.3]{A3}: the point is that for two commuting derivations $D_1, D_2\in \Omega^\vee$ (viewed as derivations of $A$), the eventuality that $\nabla_{D_1}$ and $\nabla_{D_2}$ do not commute is no obstruction for solvability in a  differential extension $R'$ in which the extension of $D_1$ and $D_2$ may not commute any longer.    \end{rem}
  
          \begin{rem}\label{RR2} (On the triviality of $\Sigma$). From Lemma \ref{simple} and the fact that $\sO(\Sigma)$ is a simple differential ring, it follows that $\Sigma$ is integral. In general, this torsor is non trivial, since the differential Galois group $G$ need not be connected. 
          
          However, {\it when $G$ is connected, and when $\sR$ is any localization of  $C[z]$} (viewed as a differential ring in the standard way), then {\it $\Sigma$ is a trivial torsor under $G_R$}: this follows from the triviality of torsors over open subsets of the affine $C$-line, under (pull-back of) connected linear algebraic $C$-groups, \cf \cite[prop. 5]{P}.    
      \end{rem}

      \begin{lemma}\label{L2.5} \begin{enumerate} \item For any field extension $C' /C$, the differential Galois group of $\sM_{C'}$ is $G_{C'}$, and $\sR'_{C'}$ is a Picard-Vessiot algebra for  $\sM_{C'}$.     \item $\sR'_\sK $ is the Picard-Vessiot algebra for $\sM_\sK$.\end{enumerate} \end{lemma}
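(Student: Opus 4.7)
The plan is to verify in each case that the ring in question satisfies all the defining properties of a Picard-Vessiot algebra recalled in \S\ref{su2.4}: simplicity, faithful flatness, constant ring equal to the base constant field, solvability of $\sM$ and $\sM^\vee$, and minimality.

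For (1), simplicity of $\sR'_{C'}$ is immediate from Corollary \ref{C1.3} applied to the simple differential ring $\sR'$. Faithful flatness of $\sR'_{C'}/\sR_{C'}$ follows by base change along $\sR \to \sR_{C'}$. The solvability of $\sM_{C'}$ and $\sM_{C'}^\vee$, and the fact that $\sR'_{C'}$ is generated over $\sR_{C'}$ by the same solution entries that generate $\sR'$ over $\sR$, both come from tensoring solutions with $C'$ over $C$; this also yields minimality. The one substantive point is the constant ring: given $x \in (\sR'_{C'})^\nabla$, write $x = \sum_i r'_i \otimes c'_i$ with the $c'_i$ linearly independent over $C$; using the canonical identification $\Omega \otimes_R \sR'_{C'} = (\Omega \otimes_R \sR') \otimes_C C'$, the equation $\sum_i d(r'_i) \otimes c'_i = 0$ forces $d(r'_i)=0$ for each $i$, so $r'_i \in (\sR')^\nabla = C$ and $x \in C'$. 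Finally, the Galois group of $\sM_{C'}$ is computed via the fiber functor $\omega_{C'} := \omega \otimes_C C'$ on $\langle \sM_{C'}\rangle^\otimes$ (tannakian over $C'$ by Theorem \ref{T2.1} applied to the simple ring $\sR_{C'}$), giving $\mathbf{Aut}^\otimes \omega_{C'} = (\mathbf{Aut}^\otimes \omega)_{C'} = G_{C'}$, which agrees with $\mathbf{Aut}(\sR'_{C'}/\sR_{C'})$ by the identification recalled in \S\ref{su2.4}.

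For (2), the cleanest approach is through the construction of Picard-Vessiot algebras from fiber functors recalled in \S\ref{su2.4}. By that construction, $\sR' = \sO(\Sigma)$ for the torsor $\Sigma = \mathbf{Iso}^\otimes(\omega \otimes_C R, \vartheta)$. By Theorem \ref{T2.1}(2), the equivalence $\langle \sM\rangle^\otimes \cong \langle \sM_\sK\rangle^\otimes$ lets us view the same $\omega$ as a fiber functor on $\langle \sM_\sK\rangle^\otimes$ as well; the corresponding torsor of solutions of $\sM_\sK$ is $\mathbf{Iso}^\otimes(\omega \otimes_C K, \vartheta_\sK) \cong \Sigma \times_{\Spec R} \Spec K$, whose coordinate ring is $\sR' \otimes_R K = \sR'_\sK$. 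Hence $\sR'_\sK$ is, by construction, the Picard-Vessiot algebra for $\sM_\sK$ attached to $\omega$.

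The only genuinely non-trivial step is the identification of the constants. In (1) it is handled directly by the linear-independence trick used above; in (2) it is absorbed into the identification of $\sR'_\sK$ with the coordinate ring of the torsor $\Sigma \times_{\Spec R} \Spec K$, whose Picard-Vessiot structure (in particular, its constant ring being $C$) is inherited from that of $\Sigma$.
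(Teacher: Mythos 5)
Your proof is correct and follows essentially the same route as the paper: base-change of simplicity via Corollary \ref{C1.3}, of solvability and the generation condition, and identification of the Galois group through the torsor/fiber-functor formalism of \S\ref{su2.4}, with part (2) reduced to the equivalence $\langle \sM\rangle^{\otimes}\cong\langle \sM_\sK\rangle^{\otimes}$ of Theorem \ref{T2.1}(2). The only difference is that you spell out the computation of the constant ring of $\sR'_{C'}$ by the linear-independence argument, a detail the paper leaves implicit in its appeal to Corollary \ref{C1.3}.
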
 
             
      \begin{proof} (1) Note that $\sR_{C'}$ and $\sR'_{C'}$ are simple with constant field $C'$ by Corollary \ref{C1.3}. On the other hand, $\sM_{C'}$ and its dual are solvable in $\sR'_{C'}$; and $\sR'_{C'}$ is generated by $\langle M^\vee_{C'}, (\sM_{\sR'_{C'}})^\nabla\rangle$ and $\langle M_{C'}, (\sM^\vee_{\sR'_{C'}})^\nabla\rangle$. Hence $\sR'_{C'}$ is a Picard-Vessiot algebra for  $\sM_{C'}$. Hence the torsor of solutions of $\sM_{C'}$ is $\Sigma_{C'}$, its right automorphism group is $G_{R_{C'}}$, and one concludes that the differential Galois group is $G_{C'}$.    
      
      (2) follows from the equivalence of categories established in item (2) of the previous theorem.  \end{proof} 
      
   \subsection{}\label{su2.5}  We still denote by $\omega$ the equivalence of ind-tannakian categories 
  $$ \omega = ( - \otimes_R R')^\nabla:\; {\rm{Ind}}\, \langle \sM\rangle^{\otimes} \to {\rm{Ind}}\, {\rm{Rep}}\, G.$$
  Note that ${\rm{Ind}}\, {\rm{Rep}}\, G$ is nothing but the category of rational $G$-modules, \ie  $C$-vector spaces on which $G$ acts as a group of automorphisms, and which are sums of finite-dimensional $G$-stable subspaces on which the given action of $G$ is by some rational representation \cf \eg \cite[p. 7]{G}. 
  For any $\sN\in {\rm{Ind}}\, \langle \sM\rangle^{\otimes}$, there is a canonical isomorphism of $\sR'$-differential modules  
  \begin{equation}\label{iso}  \omega(\sN)\otimes_C \sR' \stackrel{\sim}{\to} \sN\otimes_R \sR' \end{equation} (coming from the canonical $\sR'$-point of $\Sigma$).
  Since $R'$ is faithfully flat over $R$, we conclude that

\begin{cor}\label{C1.4} For any object $\sN$ in ${\rm{Ind}}\, \langle \sM\rangle^{\otimes}$, the underlying $R$-module $N$ is faithfully flat. \qed
\end{cor}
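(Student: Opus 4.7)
The plan is to exploit the canonical isomorphism of $\sR'$-modules $\omega(\sN)\otimes_C \sR' \simeq \sN\otimes_R \sR'$ displayed in (\ref{iso}), combined with faithfully flat descent along $\sR \to \sR'$. The first observation is that the left-hand side is a free $R'$-module: $\omega(\sN)$ is a $C$-vector space, so it admits a $C$-basis indexed by some set $I$, and therefore $\omega(\sN)\otimes_C R' \simeq (R')^{(I)}$. Transported through the isomorphism, this shows that $N\otimes_R R'$ is a free $R'$-module; assuming $\sN\neq 0$ (the zero case being vacuous, and ruled out by faithfulness of the fiber functor as soon as $\sN\ne 0$), the set $I$ is non-empty, and $N\otimes_R R'$ is faithfully flat over $R'$.

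Next I would descend faithful flatness to $R$. Since $\sR'$ is faithfully flat over $\sR$ by the definition of a Picard-Vessiot algebra recalled in \ref{su2.4}, flatness of $N\otimes_R R'$ over $R'$ implies flatness of $N$ over $R$ (standard faithfully flat descent for flatness). To upgrade this to faithful flatness, I would test against an arbitrary $R$-module $P$: if $P\otimes_R N=0$, then $(P\otimes_R R')\otimes_{R'}(N\otimes_R R')=0$, so $P\otimes_R R'=0$ by the faithful flatness of the free $R'$-module $N\otimes_R R'$, and finally $P=0$ by the faithful flatness of $R'$ over $R$.

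I do not anticipate any serious obstacle; the entire argument is essentially a formal consequence of (\ref{iso}) and two successive applications of faithful flatness (once over $R'$, once over $R$). The only point requiring a moment of care is the exclusion of the trivial case $\omega(\sN)=0$, which is handled by the tannakian equivalence $\langle \sM\rangle^\otimes\cong \mathrm{Rep}\,G$: a non-zero object of an ind-tannakian category has non-zero image under any fiber functor.
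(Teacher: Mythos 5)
Your argument is correct and is essentially the paper's own proof: the corollary is stated immediately after the isomorphism \eqref{iso} with the one-line justification ``since $R'$ is faithfully flat over $R$,'' i.e.\ exactly the combination of freeness of $\omega(\sN)\otimes_C R'$ over $R'$ with descent along the faithfully flat map $R\to R'$ that you spell out. Your extra care about the case $\sN=0$ is a harmless refinement of the same route.
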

 
  Via $\omega$, differential algebra extensions of $\sR$ in ${\rm{Ind}}\, \langle \sM\rangle^{\otimes}$   correspond to rational $G$-algebras (for instance $\sR'$ correspond to $C[G]$ with $G$-action by left translations), and their differential ideals correspond to $G$-ideals.  
  
  \begin{cor}\label{C1.5}  Assume that $R$ is finitely generated over $C$. Let $\sS\in {\rm{Ind}}\, \langle \sM\rangle^{\otimes}\,$ be a differential algebra extension of  $\sR$. Then locally for the \'etale topology on $\Spec R$,  $\, \Spec\,S$ is isomorphic to $\Spec \omega(\sS)\times_C R$.   \end{cor}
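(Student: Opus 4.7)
\emph{Proof plan.} The strategy is to exploit the tannakian isomorphism \eqref{iso} in order to trivialize $\Spec S$ after pullback along the Picard-Vessiot torsor $\Sigma=\Spec R'$, and then to descend the trivialization via an étale-local section of $\Sigma \to \Spec R$.

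\emph{Step 1 (Trivialization on $\Sigma$).} Applying the canonical isomorphism \eqref{iso} with $\sN=\sS$ yields
$$\omega(\sS)\otimes_C \sR'\ \stackrel{\sim}{\longrightarrow}\ \sS\otimes_{\sR} \sR'.$$
Since $\sS$ is an algebra object in ${\rm{Ind}}\,\langle \sM\rangle^{\otimes}$ (its multiplication being a morphism $\sS\otimes_{\sR}\sS \to \sS$ in that category) and since \eqref{iso} is a tensor-natural isomorphism, this is actually an isomorphism of $\sR'$-algebras. Geometrically, the pullback of $\Spec S\to\Spec R$ along $\Sigma \to \Spec R$ is thereby identified with the constant-fiber bundle $\Spec \omega(\sS)\times_C \Sigma$.

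\emph{Step 2 (Étale-local trivialization of $\Sigma$).} Because $\car C=0$, the linear algebraic group $G$ is smooth; since $R$ is finitely generated over $C$, so is $R'$, and $\Sigma \to \Spec R$ is a $G_R$-torsor, hence a smooth surjective morphism of finite type between Noetherian schemes. By the standard fact that such morphisms admit sections étale-locally on the target, there is an étale surjection $\Spec R''\to \Spec R$ and a section, equivalently, an $R''$-algebra homomorphism $\varepsilon : R'\otimes_R R'' \to R''$ trivializing $\Sigma_{R''}\cong G\times_C \Spec R''$.

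\emph{Step 3 (Descent).} Base-changing the isomorphism of Step~1 from $R$ to $R''$ gives an isomorphism
$$\omega(\sS)\otimes_C (R'\otimes_R R'')\ \stackrel{\sim}{\longrightarrow}\ S\otimes_R (R'\otimes_R R'')$$
of $(R'\otimes_R R'')$-algebras. Tensoring both sides with $R''$ over $R'\otimes_R R''$ via $\varepsilon$, the left hand side reduces to $\omega(\sS)\otimes_C R''$ and the right hand side to $S\otimes_R R''$ (the factor $R'\otimes_R R''$ appears in each side only through the tensorand that is being collapsed via $\varepsilon$). This yields the desired $R''$-algebra isomorphism $S\otimes_R R'' \cong \omega(\sS)\otimes_C R''$, and hence étale-local triviality of $\Spec S$ over $\Spec R$ with fiber $\Spec\omega(\sS)$.

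\emph{Main obstacle.} The only non-formal step is Step~2: it requires $G$ smooth (which uses $\car C =0$) and $\Sigma \to \Spec R$ of finite type between Noetherian schemes (which uses the finite generation of $R$ over $C$)—exactly the hypotheses of the statement. Step~1 is a direct application of \eqref{iso} and Step~3 is a formal descent along the torsor section furnished by Step~2.
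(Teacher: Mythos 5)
Your proposal is correct and follows essentially the same route as the paper: the paper's (one-sentence) proof likewise applies \eqref{iso} to trivialize $\Spec S$ after base change along the smooth surjective morphism $\Sigma=\Spec R'\to\Spec R$ and then invokes the existence of \'etale-local sections of smooth surjections (citing [EGAIV, 17.6.3]) to descend. You have merely made explicit the two points the paper leaves implicit — that \eqref{iso} is an isomorphism of $\sR'$-\emph{algebras}, and the collapsing of the trivialization along the \'etale-local section $\varepsilon$ — both of which are handled correctly.
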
   
 
   \begin{proof} By \eqref{iso}, $S$ and $\omega(\sS)_R$ become isomorphic after smooth surjective base change $\Spec R'\to \Spec R$, hence after \'etale surjective base change since $\Spec R'= \Sigma\to \Spec R$ is smooth surjective (\cf [EGAIV, 17.6.3]). \end{proof}

         \section{Solution algebras and affine quasi-homogeneous varieties} 
     
     Here again, $\sR$ is a simple (generalized) differential ring with algebraically closed field of constants $C$ of characteristic $0$, $\sK$ is its quotient field, and $\sM$ is a finitely generated differential module.
     
     \subsection{} Let $\sS/\sR$ be a differential extension. 
     
          \begin{defn}\label{D3.1}  $\sS$ is a {\emph{solution algebra}} for $\sM$ if 
     \begin{enumerate} 
     \item $S$ is a domain, \smallskip\item the constant field of its quotient field $L$ (viewed as a differential extension $\sL$ of $\sK$) is $C$,
     \smallskip\item there is a solution $v$ of $\sM$ in $\sS$ (\ie a morphism $ \sM \stackrel{v}{\to}  \sS$ of differential modules over $\sR$)   such that the image of $v$ generates the $R$-algebra $S$. 
      \end{enumerate}
      A solution algebra for $\langle \sM\rangle^{\otimes}$ is a solution algebra for some $\sN \in \langle \sM\rangle^{\otimes}$.
     \end{defn} 

\begin{ex}  A Picard-Vessiot algebra $\sR'$ for $\sM$ is a solution algebra for $\sM^r \oplus (\sM^\vee)^r$, with $r= \rk M$ (the solution $v$ being given by $(v_1,\ldots, v_r, v_1^\vee, \ldots, v_r^\vee)$, where $(v_1,\ldots, v_r)$ is a $C$-basis of solutions of $\sM$ in $\sR'$ and $(v_1^\vee, \ldots, v_r^\vee)$ is the dual basis). \end{ex}
       
     \begin{rem} Condition (2) is stronger than requiring that the constant ring of $\sS$ is $C$. For instance, if $\sR= (C[z], d=\frac{d}{dz}),\, \sM= (C[z]^2, \nabla= d - {\rm{diag}}(1,2)),\;S= C[x,y,z],$ with $ \,dx=x, \, dy=2y$, and $v$ maps the canonical basis of $M$ to $(x,y)$, then the constant ring of $\sS$ is $C$, but the constant field of its quotient field is $C(\frac{x^2}{y})$, so that $\sS$ is not a solution algebra for $\sM$ in the sense of Definition \ref{D3.1} (but its quotient by the differential ideal generated by $y-x^2$ is a solution algebra for $\sM$).  
     \end{rem} 
     
         \begin{ex}  If $\Omega= R$ and $\sM \cong \sR/ \sR. \phi$ is a cyclic differential module, then a solution algebra for $\sM$ is the differential $R$-algebra generated by a solution of $\phi$ (in some differential extension field with constant field $C$).
            \end{ex} 
            
     \begin{prop}\label{P3.1}  Any solution algebra for $\langle \sM\rangle^{\otimes}$ belongs to ${\rm{Ind}}\langle \sM\rangle^{\otimes} $,  hence is faithfully flat over $R$.   \end{prop}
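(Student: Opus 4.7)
Let $\sS$ be a solution algebra for some $\sN\in\langle\sM\rangle^{\otimes}$, generated by a solution $v\colon \sN\to\sS$. The strategy is to exhibit $\sS$ as a quotient of differential $\sR$-algebras of the symmetric algebra $\mathrm{Sym}^{\cdot}\sN$, and then to check that $\mathrm{Sym}^{\cdot}\sN$ lies in $\mathrm{Ind}\,\langle\sM\rangle^{\otimes}$. Because $\mathrm{Ind}\,\langle\sM\rangle^{\otimes}$ is abelian and closed under subquotients, this will force $\sS$ itself to lie in $\mathrm{Ind}\,\langle\sM\rangle^{\otimes}$, and the faithful flatness of $S$ over $R$ will then follow immediately from Corollary~\ref{C1.4}.

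First I would endow $\mathrm{Sym}^{\cdot}\sN=\bigoplus_{n\geq 0}\mathrm{Sym}^{n}\sN$ with its canonical structure of differential $\sR$-algebra: extend $d\colon R\to\Omega$ by the Leibniz rule using the connection $\nabla$ of $\sN$ on degree-one generators. By the universal property of symmetric algebras in the category of commutative $\sR$-algebras, the solution $v$ extends uniquely to an $\sR$-algebra homomorphism
\[
v^{\cdot}\colon \mathrm{Sym}^{\cdot}\sN \longrightarrow \sS ,
\]
which is automatically compatible with the derivations (since both sides satisfy Leibniz and the extension is determined on generators). By condition (3) of Definition~\ref{D3.1}, $v^{\cdot}$ is surjective, so $\sS\cong\mathrm{Sym}^{\cdot}\sN/\ker v^{\cdot}$, and $\ker v^{\cdot}$ is a differential ideal.

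Next I would prove that $\mathrm{Sym}^{\cdot}\sN\in\mathrm{Ind}\,\langle\sM\rangle^{\otimes}$. Since $\car C=0$, the symmetrization idempotent $\frac{1}{n!}\sum_{\sigma\in\mathfrak{S}_{n}}\sigma$ realises $\mathrm{Sym}^{n}\sN$ as a direct summand of $\sN^{\otimes n}$. The action of $\mathfrak{S}_{n}$ on $\sN^{\otimes n}$ is horizontal for the tensor-product connection (permutation of factors commutes with $\nabla^{\otimes n}$), hence the symmetrizer is horizontal, so this direct-summand decomposition is one of differential modules. Thus each $\mathrm{Sym}^{n}\sN$ belongs to $\langle\sM\rangle^{\otimes}$, and $\mathrm{Sym}^{\cdot}\sN$, being the filtered colimit of its finite partial sums, lies in $\mathrm{Ind}\,\langle\sM\rangle^{\otimes}$. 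The quotient $\sS=\mathrm{Sym}^{\cdot}\sN/\ker v^{\cdot}$ then lies there too, and Corollary~\ref{C1.4} gives faithful flatness.

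The main point to verify carefully is the horizontality of the symmetrizer, i.e.\ that the summand decomposition $\sN^{\otimes n}=\mathrm{Sym}^{n}\sN\oplus(\text{rest})$ is genuinely a decomposition in $\langle\sM\rangle^{\otimes}$ and not merely of underlying $R$-modules; once that is in hand, the argument is purely formal. I do not anticipate any serious obstacle, since conditions (1) and (2) of Definition~\ref{D3.1} are not needed for this proposition — they will be used later to identify $\sS$ with a subalgebra of the Picard--Vessiot algebra — and the only input here is the existence of the generating solution $v$ together with the tannakian stability properties of $\mathrm{Ind}\,\langle\sM\rangle^{\otimes}$.
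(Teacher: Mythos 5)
Your proposal is correct and follows exactly the paper's argument: extend $v$ to the surjective differential-algebra map $v^{\cdot}\colon \mathrm{Sym}^{\cdot}\sN\to\sS$, conclude $\sS\in\mathrm{Ind}\,\langle\sM\rangle^{\otimes}$, and invoke Corollary~\ref{C1.4} for faithful flatness. The only difference is that you spell out why $\mathrm{Sym}^{n}\sN$ lies in $\langle\sM\rangle^{\otimes}$ (via the horizontal symmetrizer in characteristic $0$), a verification the paper leaves implicit.
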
 
     
         \begin{proof} The morphism $v: \sN \to  \sS$ extends to a morphism $\, v^{\bf \cdot}:  {\rm{Sym}}^{\bf \cdot}\, \sN \to \sS\,$ which is surjective by item (3) of Definition \ref{D3.1}, hence $\sS\in {\rm{Ind}}\langle \sM\rangle^{\otimes} $.  Faithful flatness over $\,R\,$ follows, due to Corollary 2.5.1. \end{proof}
        
        \smallskip We fix a fiber functor $\,\omega: \langle \sM\rangle^{\otimes}\to {\rm{Vec}}_C$. Let $\,G\subset GL(\omega(V))$ be the differential Galois group of $\sM$, and let $\sR'$ be the Picard-Vessiot algebra of $\sM$, so that $\;R'= \sO(\Sigma), $ and $ \omega$ is canonically isomorphic to $(-\otimes_\sR \sR')^\nabla.$
        \smallskip
        
     \begin{prop}\label{T3.1}   
      \begin{enumerate}   \item Any solution algebra $\sS$ for $\langle \sM\rangle^{\otimes} $ embeds as a differential sub-extension of $\sR'/\sR$.
     \smallskip   \item Conversely, any differential sub-extension $\sS$ of $\sR'/\sR$ which is finitely generated over $R$ is a solution algebra for $\, \langle \sM\rangle^{\otimes}  $.
       \smallskip\item Given $\sN\in  \langle \sM\rangle^{\otimes} ,$ $\sS\mapsto \sS_\sK, \; \sS_K\mapsto \sS_\sK \cap \sR'$ are inverse bijections between solution algebras for $\sN$ in $\sR'$ and solution algebras for $\sN_{\sK}$ in $\sR'_\sK$.     \end{enumerate}
     \end{prop}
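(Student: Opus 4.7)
The three parts will be handled in turn; the crux is the embedding in (1), which I plan to establish through a differential compositum argument, after which (2) and (3) follow by standard ind-tannakian bookkeeping.

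\emph{Part (1).} Let $v\colon\sN\to\sS$ be a generating solution with $\sN\in\langle\sM\rangle^{\otimes}$. By Proposition~\ref{P3.1}, $\sS$ belongs to $\mathrm{Ind}\,\langle\sM\rangle^{\otimes}$, and by condition~(2) of Definition~\ref{D3.1} the fraction field $\sL$ of $\sS$ has constant field $C$. I will form a differential compositum $\sE$ of $\sL$ and $\sK'=\mathrm{Frac}\,\sR'$ as the residue field at a maximal differential ideal of $\sL\otimes_{\sK}\sK'$; by a standard argument in the generalized Picard--Vessiot setting of \cite{A3}, using algebraic closure of $C$ and the Picard--Vessiot structure of $\sK'/\sK$, one has $\sE^{\nabla}=C$. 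Inside $\sE$, Lemma~\ref{L2.5}(2) then yields
\[
(\sN^\vee_\sK\otimes_{\sK}\sE)^{\nabla} \;=\; (\sN^\vee_\sK\otimes_{\sK}\sK')^{\nabla} \;=\; \omega(\sN^\vee),
\]
a $C$-space of dimension $\rk\sN$. Hence the solution $v\in(\sN^\vee\otimes_\sR\sS)^{\nabla}$, regarded inside $(\sN^\vee_\sK\otimes_{\sK}\sE)^{\nabla}$, already lies in $\omega(\sN^\vee)\subset\sN^\vee\otimes_\sR\sR'$; equivalently, $v$ factors through a morphism $\sN\to\sR'$. Therefore $v(\sN)\subset\sR'$, and so $\sS=\sR[v(\sN)]\subset\sR'$.

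\emph{Part (2).} Let $\sS$ be a differential sub-$\sR$-algebra of $\sR'$ generated by $s_1,\ldots,s_k$. Since $\sR'\in\mathrm{Ind}\,\langle\sM\rangle^{\otimes}$ and this tannakian category is closed under subobjects, $\sS$ is itself a filtered union $\bigcup_\alpha\sS_\alpha$ with $\sS_\alpha\in\langle\sM\rangle^{\otimes}$; for $\alpha$ large enough the generators lie in $\sS_\alpha$. Setting $\sN=\sS_\alpha$ and letting $v\colon\sN\hookrightarrow\sS$ be the inclusion, the image of $v$ generates $\sS$ as an $\sR$-algebra. The other conditions of Definition~\ref{D3.1} are immediate from the fact that $\sS\subset\sR'\subset\sK'$ is a domain whose fraction field has constants $C$ (by simplicity of $\sR'$ and Lemma~\ref{simple}).

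\emph{Part (3).} For $\sS\subset\sR'$ a solution algebra for $\sN$, its localization $\sS_\sK\subset\sR'_\sK$ is manifestly a solution algebra for $\sN_\sK$, generated by $v_\sK$. Conversely, if $\sT\subset\sR'_\sK$ is generated by a solution $w\in(\sN^\vee_\sK\otimes_{\sK}\sR'_\sK)^{\nabla}$, Lemma~\ref{L2.5}(2) identifies this space with $\omega(\sN^\vee)=(\sN^\vee\otimes_\sR\sR')^{\nabla}$, so $w$ factors through a morphism $\sN\to\sR'$ and generates a solution algebra $\sS^{\circ}=\sR[w(\sN)]\subset\sR'$ with $\sS^{\circ}_\sK=\sT$. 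It remains to verify the two identities. For $(\sT\cap\sR')_\sK=\sT$: any $t\in\sT$ writes as $t=r'/r$ with $r'\in\sR'$, $r\in R\setminus\{0\}$, and then $r'=rt\in\sT\cap\sR'$, so $t\in(\sT\cap\sR')_\sK$. For $(\sS_\sK)\cap\sR'=\sS$: the quotient $\sR'/\sS$ is a subquotient of ind-objects in $\langle\sM\rangle^{\otimes}$, hence lies in $\mathrm{Ind}\,\langle\sM\rangle^{\otimes}$ and is $R$-flat by Corollary~\ref{C1.4}; $R$-torsion-freeness then gives the identity. Together these give $\sS=\sS^{\circ}=\sT\cap\sR'$, completing the bijection.

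\emph{Main obstacle.} The delicate step is the compositum in part~(1): that some quotient of $\sL\otimes_{\sK}\sK'$ by a maximal differential ideal still has constant field $C$. In the classical field setting this is a standard lemma (cf.\ \cite[Lemma 1.17]{PS}); here one needs the corresponding statement in the generalized setting developed in \cite{A3}. All later claims are formal consequences of the ind-tannakian equivalence and the flatness Corollary~\ref{C1.4}.
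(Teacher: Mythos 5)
Your parts (2) and (3) are essentially the paper's own arguments: (2) is the paper's proof transported from the side of rational $G$-modules to the differential side via the filtered-union description of objects of $\mathrm{Ind}\,\langle\sM\rangle^{\otimes}$, and (3) fills in details the paper leaves to the equivalence of Theorem \ref{T2.1}(2); your torsion-freeness argument for $(\sS_\sK)\cap\sR'=\sS$ via Corollary \ref{C1.4} is correct. Part (1) is where you genuinely diverge. The paper does not form a compositum of $\sL$ with the given $\sK'$: it takes a Picard--Vessiot algebra $\sS'_1$ for $\sM_\sL$ (which exists with constant field $C$ by the tannakian construction of \cite{A3}, because $\sL$ has constants $C$), observes that the torsor of solutions attached to the fiber functor $(-\otimes_\sR\sS'_1)^\nabla$ yields a Picard--Vessiot algebra $\sR'_1\subset\sS'_1$ containing $\sum_k\langle\mathrm{Sym}^k\sM,(\mathrm{Sym}^k\sM^\vee_{\sS'_1})^\nabla\rangle\supset\sS$, and identifies $\sR'_1$ with $\sR'$ by uniqueness. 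Your route instead fixes $\sR'$ and pulls $v$ into it by a solution-space comparison inside a compositum $\sE$; the dimension count $(\sN^\vee_\sK\otimes_\sK\sE)^\nabla=\omega(\sN^\vee)$ via Lemma \ref{L2.1} is correct \emph{granted} $\sE^\nabla=C$. That granted statement is the one real gap: it is not among the lemmas the paper establishes, and the paper's proof is engineered precisely to avoid needing it. It is true, but as stated your construction is slightly off: the standard "constants stay algebraic over $C$, hence equal $C$" argument requires the simple quotient to be \emph{finitely generated} over $\sL$, so you should tensor with $\sR'_\sK$ (a finitely generated $\sK$-algebra) rather than with the field $\sK'$, take a maximal differential quotient there, and then pass to fraction fields using Lemma \ref{simple}(3). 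With that lemma supplied (in the generalized setting of \cite{A3}), your argument for (1) goes through; what the paper's route buys is that it only invokes facts already packaged in the tannakian formalism it has set up, at the cost of the extra step of re-identifying $\sR'_1$ with $\sR'$.
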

     
     \begin{proof} (1) Since the Picard-Vessiot algebra of $\sN$ embeds in $\sR'$, it suffices to consider the case $\sN=\sM$. 
     
     Let $\sS'_1$ be a Picard-Vessiot algebra for $\sM_\sL$. It is simple, contains $\sS$, and its constant field is $C$ (since the constant field of $\sL$ is $C$ by condition (2) in Definition \ref{D3.1}).   
     
     Any object of $\langle \sM_\sK\rangle^{\otimes} $ is solvable in $\sS'_1$, whence a fiber a functor on the tannakian $C$-category 
   $\langle \sM_{C}\rangle^{\otimes} \cong \langle \sM_{\sK}\rangle^{\otimes} $ (\cf \ref{T2.1} (2)). 
The coordinate ring of the associated torsor of solutions is a Picard-Vessiot algebra $\sR'_1$ for   $\sM$  contained in $\sS'_1$. Since $\sR'_1$ contains $\sum_k\, \langle  {\rm{Sym}}^k \sM , ({\rm{Sym}}^k \sM^\vee_{\sS'_1})^\nabla\rangle$, it also contains $\sS $
by condition (3) in Definition \ref{D3.1}.  

   On the other hand, by Lemma \ref{L2.5}, $\sR'_1$ is isomorphic to $\sR'  $.

 (2)  According to \S \ref{su2.5}, $\omega(\sS)$ is a rational $G$-algebra of finite type over $C$. Let $v_1, \ldots, v_m$ be generators. The $G$-module $V_i$ generated by $v_i$ is of the form $\omega(\sN_i^\vee)$ for some $\sN_i\in  \langle \sM\rangle^{\otimes}  $. One has $\omega(\langle  \sN_i, v_i\rangle) =\langle \omega(  \sN_i), v_i\rangle = \langle V_i^\vee , v_i\rangle = V_i \subset \omega(\sS)$. Hence $v_i(\sN_i)= \langle  \sN_i, v_i\rangle \subset \sS$, and the image of the solution $v=\sum v_i$ of $\sN= \oplus \,\sN_i$ generates the $R$-algebra $S$. Since $Q(\sS)^\nabla\subset (\sK')^\nabla=C$, we conclude that $\sS$ is a solution algebra for $\sN$.  
 
 (3) Follows from the equivalence of categories established in item (2) of theorem \ref{T2.1}.  \end{proof} 
 
          \begin{ex}  If $\Omega= R$ and $\sM \cong \sR/ \sR. \phi$ is a cyclic differential module, then by item (2), a solution algebra for $\langle \sM\rangle^{\otimes} $ is the differential $R$-algebra generated by finitely many polynomials $P_j( y_i, y'_i, \ldots, 1/w)$ in solutions of $\phi$ (in some differential extension field with constant field $C$), their derivatives, and the inverse of the wronskian.
            \end{ex} 
   
   \subsection{}\label{Su3}  Let us further apply the considerations of \S \ref{su2.5} to solution algebras.   In the following theorem, ``solution algebra" means  ``solution algebra for some {$\sN\in  \langle \sM\rangle^{\otimes}\,$}."  They form a category (a full subcategory of the category of algebras in ${\rm{Ind}}\, \langle \sM\rangle^{\otimes}$).

   \begin{thm}\label{T3.2}    \begin{enumerate}   \item   $\sS \mapsto Z = \Spec \omega(\sS)$ gives rise to an anti-equivalence of categories between solution algebras for $\langle \sM\rangle^{\otimes} $ and affine quasi-homogeneous $G$-varieties.
           
 \smallskip  \item More precisely, it gives rise to a bijection between intermediate solution algebras $\,\sR\subset \sS\subset \sR'\,$ and pairs $\,(Z,v)\,$ (up to unique isomorphism) where $Z$ is an affine quasi-homogeneous $G$-variety and $v\in Z$ is a closed point of the dense orbit. 

          \smallskip    \item  Differential ideals of $\sS$ correspond to closed $G$-subsets  of $Z$.
        
        \smallskip    \item  For any solution algebra $\sS\subset \sR'$,  $R'$ is flat (and even smooth) over $S$. Moreover, $R'$ is faithfully flat over $S$ $\Leftrightarrow \sS$ is simple $\Leftrightarrow Z $ is a homogeneous $G$-variety.
      \end{enumerate}   \end{thm}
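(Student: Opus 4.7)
The plan is to translate the entire theorem through the equivalence $\omega: \mathrm{Ind}\,\langle \sM\rangle^\otimes \to \mathrm{Ind}\,\mathrm{Rep}\,G$ of \S\ref{su2.5}, under which $\sR'$ corresponds to $C[G]$ equipped with the left-translation $G$-action, and differential $R$-algebras in $\mathrm{Ind}\,\langle \sM\rangle^\otimes$ correspond to rational $G$-algebras over $C$. The bulk of the work is in (1)--(2); parts (3) and (4) then follow formally from this dictionary.

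For (1) and (2), Proposition \ref{T3.1} already identifies solution algebras with the finitely generated differential sub-$R$-algebras $\sR\subset\sS\subset\sR'$. Under $\omega$ these correspond to the finitely generated $G$-subalgebras $A\subset C[G]$: the surjection $\mathrm{Sym}^\bullet\sN\twoheadrightarrow\sS$ from the proof of Proposition \ref{P3.1} shows that $A=\omega(\sS)$ is generated over $C$ by the finite-dimensional space $\omega(\sN)$, and conversely a finite set of $G$-finite generators of $A$ spans a finite-dimensional $G$-stable subspace which yields $\sS$ as a quotient of a symmetric power of an object of $\langle \sM\rangle^\otimes$. The standard algebraic-transformation-group dictionary then identifies such $G$-equivariant $C$-algebra embeddings $C[Z]\hookrightarrow C[G]$ with pairs $(Z,v)$ where $Z$ is an affine $G$-variety and $v\in Z(C)$ has dense orbit: the embedding is the comorphism of the orbit map $g\mapsto g\cdot v$, which is injective precisely when $Z=\overline{G.v}$, i.e., when $Z$ is quasi-homogeneous with $v$ in the dense orbit. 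This proves (2); for (1), forgetting the embedding of $\sS$ into $\sR'$ corresponds to forgetting $v$, the action of $G=\mathrm{Aut}_\nabla\,\sR'/\sR$ on the set of embeddings matching the $G$-action on the dense orbit of $Z$.

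A differential ideal of $\sS$ is a subobject in $\mathrm{Ind}\,\langle \sM\rangle^\otimes$ that is moreover an ideal in the $R$-algebra $S$, hence under $\omega$ corresponds to a $G$-stable ideal of $C[Z]$, i.e., a closed $G$-subvariety of $Z$; this yields (3). For (4), the canonical $G$-equivariant isomorphism $\sS\otimes_R\sR'\cong\omega(\sS)\otimes_C\sR'=C[Z]\otimes_C\sR'$ from (\ref{iso}) implies that after the faithfully flat base change $\Spec\,R'=\Sigma\to\Spec\,R$, the morphism $\Spec\,R'\to\Spec\,S$ induced by $\sS\hookrightarrow\sR'$ becomes the morphism $G\times_C\Sigma\to Z\times_C\Sigma$ given by $(g,\sigma)\mapsto (g\cdot v,\sigma)$. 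In characteristic zero the orbit map $G\to G.v$ is smooth (a principal $\mathrm{Stab}_G(v)$-bundle with smooth stabilizer), and $G.v\hookrightarrow Z$ is an open immersion since orbits are open in their closure; hence $G\to Z$ is smooth, and by fppf descent so is $\Spec\,R'\to\Spec\,S$, which is in particular flat. Faithful flatness amounts additionally to surjectivity, which descends to surjectivity of $G\to Z$, i.e., $G.v=Z$, i.e., homogeneity of $Z$; by (3) this is equivalent to $\sS$ being simple.

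The one step calling for real care is the dictionary in (1)--(2): in particular, matching finite generation on the two sides, and pinning down how the embedding data of $\sS\subset\sR'$ encodes the base point $v\in Z$. Once $\omega$ has been invoked and the standard properties of orbit maps in characteristic zero are at hand, the remaining claims are essentially bookkeeping.
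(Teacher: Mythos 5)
Your proposal is correct and follows essentially the same route as the paper: embed $\sS$ in $\sR'$ via Proposition \ref{T3.1}, transport everything through $\omega$ to identify $\omega(\sS)\subset C[G]$ with the comorphism of the orbit map $g\mapsto g.v$ (so $Z=\overline{G.v}$), read off differential ideals as $G$-stable ideals, and deduce (4) from the isomorphism \eqref{iso} together with smoothness of $G\to Z$ and faithfully flat descent. The extra details you supply (matching finite generation on both sides, smoothness of the orbit map in characteristic zero) are exactly the points the paper leaves implicit.
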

   
    \begin{proof}  
     (1) (2) If one embeds $\sS$ into the Picard-Vessiot algebra $\sR'$ (Proposition \ref{T3.1} (1)) and apply $\omega$ to the following morphisms of differential algebra extensions of $\sR$ in ${\rm{Ind}}\, \langle \sM\rangle^{\otimes}$: 
  $\;  {\rm{Sym}}^{\bf \cdot}\, \sM \stackrel{v^{\bf \cdot} }{\surj} \sS \inj \sR',\,$ one gets morphisms of rational $G$-algebras
  $$C[\omega(\sM^\vee)]= {\rm{Sym}}^{\bf \cdot}\, \omega(\sM) \stackrel{v^{\bf \cdot}}{\surj} \omega(\sS) \inj \omega(\sR')=C[G].$$ Identifying $v$ with a point in the vector space $V = \omega(\sM^\vee)$, the composed morphism $C[V]\to \omega(\sS) \inj C[G]$ is nothing but the comorphism of the morphism $G \to  V$ given by $g\mapsto g.v$, which factors through the dominant morphism  $\pi: G\to Z = \Spec\,\omega(\sS)$. It follows that the closed subset $Z $ of $V$ is the closure $\overline{G.v}\subset V$.  
  
  The $\otimes$-equivalence    $ \, {\rm{Ind}}\, \langle \sM\rangle^{\otimes} \stackrel{\omega}{\to} {\rm{Ind}}\, {\rm{Rep}}\, G\,$ thus induces a fully faithful contravariant functor from solution algebras $\sS$  for $\langle \sM\rangle^{\otimes} $ to affine quasi-homogeneous $G$-varieties $\,Z$, and an injection from intermediate solution algebras $\,\sR\subset \sS\subset \sR'\,$ to pairs $(Z, \pi(1))$.

  \smallskip Conversely, let $Z$ be an affine quasi-homogeneous $G$-variety, and $v\in Z$ be in the dense orbit, whence a dominant $G$-morphism $\,G\stackrel{\pi}{\to} Z = \Spec\,\omega(\sS),\; v= \pi(1)$.   
 Since $C[Z]$ is a rational $G$-algebra, it is a quotient of $\rm{Sym}^{\bf \cdot}\,V^\vee$ for some finite $G$-module $V$. This provides a closed $G$-embedding $Z\inj V$. Since $Z$ is quasi-homogeneous, it is the closure of a $G$-orbit $\,\overline{G.v}\in V$.   
   
   Let  $\,  \sN\in \langle \sM\rangle^{\otimes}\,$ be such that $\omega(\sN)= V^\vee$, let $\sS$ be the algebra in $\rm{Ind}\,\langle \sM  \rangle^{\otimes}$ such that $\omega(\sS )= C[Z]$, and let $v: \sN \to  \sS$ be the morphism whose image by $\omega$ is the given point $\,v\in V$.  Then $\rm{Sym}^{\bf \cdot}\,\sN\to \sS$ is an epimorphism since ${\rm{Sym}}^{\bf \cdot}\,V^\vee\to C[Z]$ is.  The choice of $v$ specifies the dominant $G$-morphism $G\to Z$, and corresponds via $\omega$ to an embedding $\sS\inj \sR'$. It follows that $S$ is a domain and that the field of constant of its quotient field is $C$. We conclude that $\sS$ is a solution algebra for $\langle \sM\rangle^{\otimes} $ generated by the image of the solution $v$.

 \smallskip  (3) is clear: $\; \sI \leftrightarrow \Spec\,\omega(\sS/\sI)$.

    (4) Applying the isomorphism \eqref{iso} to $\sN = \sS$ and $\sN=\sR'$,  smoothness (\resp faithful flatness) of $R'$ over $S'$ follows from smoothness (\resp is equivalent to faithful flatness) of $G\to Z$. By item (3), one has: $\sS$ is simple $\Leftrightarrow \overline{G.v}= G.v \Leftrightarrow G\to Z$ is faithfully flat. 
        \end{proof}

       \begin{rem} Any solution algebra $\sS$ is a domain by definition, but the associated quasi-homogeneous variety $\,Z= \overline{G.v}\,$ may be reducible. It may even occur that $ Z $ is connected but its dense orbit $\,G.v\,$ is disconnected, as the following example shows:    $\; \sM=  (C(z)^2, \nabla = d- \begin{pmatrix}0&1\\  \frac{1}{4z} &- \frac{1}{2z}      \end{pmatrix}),\; \sS= C(z)[e^{ \sqrt{z}}, \sqrt{z}\,e^{\sqrt{z}} ]\cong C(z)[x,y]/(y^2-zx^2)\subset \sR'= C(z)[e^{\pm\sqrt{z}}, \sqrt{z}],$ and $v$ sends the canonical basis of $M$ to $(e^{\sqrt z}, 0)$. Then $Z$ is the union of the axes in $\omega(\sM^\vee)= C^2,$ which are permuted by $\mu_2\subset G= \mathbb G_m \times \mu_2$.  
 
   This example also shows that, whereas $R'$ is always a smooth $S$-algebra, $ S$  {\it may not be a smooth  $ R$-algebra}.   \end{rem}

      \begin{rem}\label{RR3}  An integral quotient $\sS'= \sS/\sI$ of a solution algebra for $\langle \sM\rangle^{\otimes} $ is a solution algebra for $\langle \sM\rangle^{\otimes} $ if and only if the constant field of $Q(\sS')$ is $C$. This occurs if and only if the $G$-variety $\Spec\,\omega(\sS')$ is quasi-homogeneous. Such quotient solution algebras correspond exactly to $G$-orbits in $Z$.
      
      The question of finiteness of $G$-orbits is a classical problem in the study of quasi-homogeneous varieties (\cf \eg \cite{ArT2}\cite{Ar1} in the affine case). In the case of $Z$, this corresponds to the question of finiteness of quotient solution algebras of $\sS$.  \end{rem}

  \section{Solution fields and observable subgroups}
     
         \subsection{} Let $\sK$ be the quotient field of $\sR$ as in the previous section. 
         
         The quotient field $\sK'$ of $\sR'$ is a {\it Picard-Vessiot field} for $\sM_\sK$. It is minimal among the differential field extensions of $\sK$ with constant field $C$ in which $\sM_\sK$ and $\sM^\vee_{\sK}$ are solvable. The differential Galois group of $\sM_\sK$ (or $\sM$) is
          $G= {\rm{Aut}}\, \sK'/\sK.$
     
      The (generalized) {\it differential Galois correspondence} is an order-reversing bijection between intermediate differential extensions $\sK\subset \sL\subset \sK'$ and closed subgroups $H< G$, given by  $H = {\rm Aut}\, \sK'/\sL$ and $\sL= (\sK')^H$.
          Moreover $\sK'$ is a Picard-Vessiot field for $\sM_{\sL}$, and $\sL$ is a Picard-Vessiot field for some $\sN\in \langle \sM_\sK\rangle^{\otimes}$ is and only if $H\triangleleft G$,  \cf \cite[3.5.2.2]{A3}.

\begin{rem} Let $\,{\rm{Vec}}_{K,C}^{K'}$ be the category of triples $(P,W,\iota )$ where $P$ is a finite-dimensional $K$-vector space, $W$ is a finite-dimensional $C$-vector space and $\iota: W\otimes_CK'\to P\otimes_K K'$ is an isomorphism. This is actually a tannakian category over $C$. One has a $\otimes$-functor $\,\langle \sM_\sK\rangle^{\otimes}\to {\rm{Vec}}_{K,C}^{K'}$ which sends  $\sN_\sK$ to  $(P= N_K, W= (N_K\otimes_K K')^\nabla$, canonical isomorphism $\iota$). This makes $\,\langle \sM_\sK\rangle^{\otimes}$ a tannakian subcategory of ${\rm{Vec}}_{K,C}^{K'}$ (one easily checks that any subobject of $(N_K, (N_K\otimes_K K')^\nabla), \iota)$ comes from $\,\langle \sM_\sK\rangle^{\otimes}$).
\end{rem}

      \subsection{}    Let $\sL/\sK$ be a differential field extension, and let $v: \sM_\sK \to  \sL$ be a solution of $\sM_\sK$ in $\sL$ (\ie a morphism of differential modules). 
     
         \begin{defn}\label{D4.1}  $\sL$ is a {\emph{solution field}} for $\sM_\sK$ if its constant field is $C$ and there is a morphism $\sM_\sK\to \sL$ of differential modules over $\sK$ whose image generates the field extension $L/K$.
         
         A solution field for $\langle \sM_\sK\rangle^{\otimes} $ is a solution field for some $\sN_\sK \in \langle \sM_\sK\rangle^{\otimes} $.
         \end{defn} 

\begin{lemma}\label{L4.2}  \begin{enumerate}  \item  The quotient field of a solution algebra $\sS$ for $\sM$ is a solution field for $\sM_\sK$.   \smallskip\item  Conversely, any solution field $\sL$ for $\sM_\sK$ is the quotient field of a (non unique) solution algebra $\sS$ for $\sM$. 
   \end{enumerate}  
\end{lemma}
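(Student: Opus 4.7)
The plan is to translate between the algebra-level generation condition of Definition \ref{D3.1} and the field-level generation condition of Definition \ref{D4.1}: base-changing to $\sK$ gives (1), while cutting out an $R$-subalgebra of $\sL$ generated by the images of finitely many $R$-generators of $M$ gives (2). The two hypotheses are almost dual, so I do not foresee any serious obstacle.

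For (1), start from a solution algebra $\sS$ for $\sM$ with solution $v\colon\sM\to\sS$ whose image generates $S$ as an $R$-algebra. Base change to $\sK$ yields a morphism of differential modules $v_{\sK}\colon\sM_{\sK}\to\sS_{\sK}$. Since $R$ is an integral domain (Lemma \ref{simple}(2)), $\sS_{\sK}=S\otimes_R K$ embeds in $\sL=Q(\sS)$ with the same quotient field. The image of $v_{\sK}$ is the $K$-span of the image of $v$, so it generates $\sS_{\sK}$ as a $K$-algebra and hence $\sL$ as a field over $\sK$; the constant-field condition comes directly from clause (2) of Definition \ref{D3.1}.

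For (2), given a solution field $\sL$ with morphism $v\colon\sM_{\sK}\to\sL$ whose image generates $L/K$, I would pick a finite $R$-generating set $m_1,\ldots,m_k$ of $M$ (possible since $M$ is finitely generated, and projective by Theorem \ref{T2.1}) and let $S\subset L$ be the $R$-subalgebra generated by $v(m_1),\ldots,v(m_k)$. To verify that $\sS$ is a differential sub-$\sR$-algebra of $\sL$, write $\nabla m_i=\sum_j m_j\otimes\omega_{ji}$; since $v$ is a morphism of differential modules, $d(v(m_i))=\sum_j v(m_j)\,\omega_{ji}\in S\otimes_R\Omega$, and the Leibniz rule propagates this to all of $S$. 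Then $S$ is a subdomain of $L$; the restricted morphism $\sM\to\sS$ generates $S$ as an $R$-algebra; and $Q(S)=K(v(m_1),\ldots,v(m_k))=L$, because the image of $v_{\sK}$ is already the $K$-span of the $v(m_i)$. All three clauses of Definition \ref{D3.1} are thereby satisfied.

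The only genuinely interesting point is the non-uniqueness asserted in the statement: different choices of finite $R$-generating set of $M$ (equivalently, different finitely generated sub-$R$-modules of $\sM_{\sK}$ whose image under $v$ $K$-spans $v(\sM_{\sK})$) produce distinct solution algebras of $\sM$ with the same quotient field $\sL$. No individual step looks hard; the main technical content is the Leibniz-rule stability of $S$, which is essentially forced by the fact that $v$ is a morphism of differential modules.
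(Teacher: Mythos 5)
Your proof is correct and follows essentially the same route as the paper, which simply declares (1) immediate and, for (2), takes $S$ to be the $R$-subalgebra of $L$ generated by $v(M)$ (the same algebra as your $R[v(m_1),\ldots,v(m_k)]$); your verification of differential stability and of $Q(S)=L$ fills in exactly what the paper leaves to the reader. One small caveat: your closing explanation of the non-uniqueness is off, since different $R$-generating sets of the \emph{same} module $M$ yield the same subalgebra $R[v(M)]$ --- the non-uniqueness (which the paper does not justify either) comes rather from the choice of the generating solution $v$, not of generators of $M$.
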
  
\begin{proof} (1) is immediate.
For (2), let $S$ be the $R$-subalgebra of $L$ generated by $v(M)$. It is clear that this is a differential algebra with quotient field $\sL$, and the conditions for a solution algebra are satisfied.
\end{proof}
   
   \begin{thm}\label{T4.1}   Let $\sK'/\sK$ be a Picard-Vessiot field for $\sM_\sK$.
   \begin{enumerate}  \item Any solution field $\sL$ for $\langle \sM_\sK\rangle^{\otimes} $ embeds as a differential sub-extension of $\sK'/\sK$. 
    \smallskip \item If $\sL\subset \sK'$ is the quotient field of a solution algebra $\sS$ for $\langle \sM_\sK\rangle^{\otimes} $, then $\sS\subset \sR'$.
  \smallskip \item  An intermediate differential field $\,\sK\subset \sL\subset \sK'\,$ is a solution field for $\langle \sM_\sK\rangle^{\otimes} $ if and only if $\,H= {\rm{Aut}}\,\sK'/\sL$ is an \emph{observable subgroup} of $\,G={\rm{Aut}}\,\sK'/\sK$.  
  
  In fact, $H$ is the isotropy group of any solution $v: \sN_\sK \to \sL$ whose image generates $\sL$.
   \smallskip \item  For any solution field $\sL = (\sK')^H$ for $\langle \sM_\sK\rangle^{\otimes} $,   $\;\;N_G(H)/H ={\rm{Aut}}\,\sL/\sK $.
     \end{enumerate}    \end{thm}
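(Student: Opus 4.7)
My plan is to leverage the anti-equivalence of Theorem~\ref{T3.2} between solution algebras and affine quasi-homogeneous $G$-varieties, together with Lemma~\ref{L4.2} relating solution fields to solution algebras, and the generalized differential Galois correspondence recalled above. Items (1) and (2) reduce to the ring-level Proposition~\ref{T3.1} combined with a dimension count of solutions in $\sK'$; item (3) is the main point and requires transporting the Galois action of $G$ on $\sK'$ to the geometric $G$-action on $Z=\Spec\omega(\sS)$; item (4) is then a formal Galois-theoretic consequence.

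\textbf{Items (1) and (2).} By Lemma~\ref{L4.2}(2), any solution field $\sL$ for some $\sN_\sK\in\langle\sM_\sK\rangle^{\otimes}$ is the quotient field of a solution algebra $\sS$ for $\sN$. Proposition~\ref{T3.1}(1) embeds $\sS$ into $\sR'$; passing to quotient fields gives $\sL\inj\sK'$, proving (1). For (2), a defining solution $v:\sN\to\sS\inj\sK'$ yields an element $\tilde v\in(\sN^\vee\otimes_\sR\sK')^\nabla$. Since $\sN\in\langle\sM\rangle^{\otimes}$ is solvable in $\sR'$, the space $(\sN^\vee\otimes_\sR\sR')^\nabla$ has $C$-dimension $\rk N$, while $(\sN^\vee\otimes_\sR\sK')^\nabla$ has $C$-dimension at most $\rk N$ because $\sK'$ has constant field $C$. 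The natural inclusion between these two $C$-vector spaces is therefore an equality, so $v$ already takes values in $\sR'$ and $\sS\subset\sR'$.

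\textbf{Item (3).} Suppose first that $\sL$ is a solution field, generated by $v:\sN_\sK\to\sL$. By (1)--(2) and Lemma~\ref{L4.2}, $\sL=Q(\sS)$ for a solution algebra $\sS\subset\sR'$ with this $v$. Theorem~\ref{T3.2} identifies $Z:=\Spec\omega(\sS)$ with $\overline{G\cdot v'}\subset V:=\omega(\sN^\vee)$, where $v':=\omega(v)$ lies in the dense orbit. Under $\omega$, the Galois action of $G$ on $\sR'\cong C[G]$ (by right translation, corresponding to the right torsor structure on $\Sigma$) restricts on $\omega(\sS)=C[Z]$ to the natural $G$-action on $C[Z]$. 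Writing $f\in C[Z]\subset C[G]$ as $f(h)=\tilde f(hv')$, a Galois element $g\in G$ fixes $f$ pointwise iff $\tilde f(hgv')=\tilde f(hv')$ for every $h$; specializing $h=1$ forces $gv'=v'$, and conversely any $g\in G_{v'}$ satisfies the condition. Hence $H=\Aut(\sK'/\sL)=G_{v'}$, which is observable as the stabilizer of a vector in a rational $G$-module. Conversely, any observable $H<G$ is the stabilizer $G_{v'}$ of some $v'$ in a finite rational $G$-module $V$; realize $V=\omega(\sN^\vee_\sK)$ via the tannakian equivalence, set $Z:=\overline{G\cdot v'}$, and apply Theorem~\ref{T3.2}(2) to obtain a solution algebra $\sS$ for $\sN$ whose distinguished solution maps to $v'$. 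Then $\sL:=Q(\sS)\subset\sK'$ is a solution field and the preceding computation yields $\Aut(\sK'/\sL)=G_{v'}=H$, so $\sL=(\sK')^H$ by the Galois correspondence.

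\textbf{Item (4) and main obstacle.} A $\sigma\in G$ restricts to an element of $\Aut(\sL/\sK)$ iff $\sigma$ preserves $\sL=(\sK')^H$, iff $\sigma H\sigma^{-1}=H$, i.e.\ $\sigma\in N_G(H)$; the kernel of restriction $N_G(H)\to\Aut(\sL/\sK)$ is by definition $H$. Surjectivity uses that $\sK'$ is itself a Picard-Vessiot field for $\sM_\sL$ over $\sL$, so any automorphism of $\sL/\sK$ lifts (non-uniquely) to $\sK'/\sK$ by the uniqueness up to isomorphism of Picard-Vessiot fields. The delicate step throughout is the identification $H=G_{v'}$ in (3), where one must trace how the Galois action on $\sK'$ translates via $\omega$ to the natural $G$-action on $Z$; once this compatibility is in place, the classical characterization of observable subgroups as isotropy groups of vectors in rational representations delivers the equivalence cleanly.
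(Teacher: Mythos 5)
Your proof is correct in substance. Items (1) and (3) follow essentially the paper's route: your isotropy computation in (3) is the function-theoretic version of the paper's pairing identity $\langle n, h.v\rangle = h(\langle n, v\rangle)$, and both directions (observable $\Rightarrow$ solution field, and conversely) are handled as in the paper. Items (2) and (4), however, take genuinely different routes. For (2), the paper does not use a dimension count: it takes the embedding $\sS \inj \sR'$ furnished by Proposition \ref{T3.1}, notes that the two resulting embeddings $\iota_1, \iota_2$ of $\sL$ into $\sK'$ differ by some $h \in H \subset G$ (because $\sK'$ is a Picard--Vessiot field for $\sM_\sL$), and concludes $\iota_1(\sS) = h(\iota_2(\sS)) \subset \sR'$ since $G$ preserves $\sR'$. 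Your argument --- that $(\sN^\vee \otimes_\sR \sR')^\nabla \inj (\sN^\vee \otimes_\sR \sK')^\nabla$ is an equality of $C$-spaces of dimension $\rk N$, so every solution of $\sN$ in $\sK'$ already lands in $\sR'$ --- is cleaner and avoids comparing embeddings; it is valid because $\sK'$ is a simple differential ring, so Lemma \ref{L2.1} bounds the second space, while solvability of $\sN$ in $\sR'$ gives the first space full dimension. For (4), the paper computes $\operatorname{Aut}_G(C[G/H]) = N_G(H)/H$ via $\omega((\sR')^H) = C[G/H]$ and then shows separately that every automorphism of $\sL/\sK$ preserves $(\sR')^H$; you instead use the classical lifting argument (uniqueness of $\sK'$ as a Picard--Vessiot field for $\sM_\sL$). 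Both work; the paper's route stays inside the invariant-theoretic dictionary, yours is shorter but leans on the uniqueness statement in the generalized setting, which the paper itself invokes in its proof of (2).

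One sentence in your item (3) is inaccurate, though not load-bearing: the Galois action (right translation on $C[G]$) does \emph{not} in general preserve the subalgebra $C[Z] \subset C[G]$, so it cannot ``restrict to the natural $G$-action on $C[Z]$'' --- the natural action on $C[Z]$ corresponds to \emph{left} translation. (For $G = SL_2$ acting standardly on $C^2$ with $v' = e_1$, one has $C[Z] = C[a,c] \subset C[a,b,c,d]/(ad-bc-1)$, and right translation sends $a$ to a combination of $a$ and $b$.) This reflects the fact that a solution algebra $\sS \subset \sR'$ is generally not stable under the Galois group. What your computation actually uses is only the condition $f(hg) = f(h)$ for all $h \in G$ and all $f$ in the image of $C[Z]$ in $C[G]$, which is a statement inside $C[G]$ and does not require stability of $C[Z]$; specializing at $h=1$ then correctly yields $H = G_{v'}$.
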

   
There are many equivalent characterizations of observable subgroups $H<G$, \cf \cite[Th. 2.1]{G}. One  is that $G/H$ is quasi-affine. Another is that every finite-dimensional rational $H$-module extends to a finite-dimensional rational $G$-module. A third one is that $H$ is the isotropy group of a vector $v$ in some rational $G$-module (and one may even require that $H$ is also the stabilizer of the line $Cv$, \cf \cite{N}). Recall also that $G$ is observable if it has no non-trivial rational character.
      
   \begin{proof} (1) is a consequence of Proposition \ref{T3.1} via item (2) of Lemma \ref{L4.2}.
      
   (2) Let $\iota_1$ be the given embedding $\sL \to \sK'$. By Proposition \ref{T3.1} again, there is an embedding $  \sS\to \sR'$, which gives rise to a second embedding $\iota_2: \sL\to \sK'$. Since $\sK'$ is a Picard-Vessiot field for $\sM_{\sL}$ with automorphism group $H$, $\iota_1= h\circ \iota_2$ for some $h\in H \subset G$. Since $G$ preserves $\sR'$ and $\iota_2(\sS)\subset \sR'$, one has $\iota_1(\sS)\subset \sR'$.

 \smallskip  In (3) and (4), one may replace $\sR$ by its quotient field $\sK$ (taking into account item (3) of Proposition  \ref{T3.1}).
   
  \smallskip (3) Let $V$ be a finite-dimensional $G$-module, and $H$ be the isotropy group of a vector $v\in V$. Let us write $V= \omega(\sN^\vee)$ for some $\sN\in  \langle \sM\rangle^{\otimes} $. Then $(K')^H$ is the subfield of $K'$ generated by $\langle N, v\rangle$.
   
   Indeed, let $H<H'<G$ be the intermediate group attached to this subfield. Then for any $n\in N$ and any $h\in H',  \, \langle n, h.v\rangle = h(\langle n,  v\rangle)= \langle n,  v\rangle$, and one concludes that $h.v=v$, whence $H=H'$.
   
   Now, any observable subgroup $H$ is such an isotropy group, and the previous observation shows that $L= (K')^H$ is a solution field generated by $v$. 
     Conversely, if $L$ is a function field generated by a solution $v$ of $\sN\in     \langle \sM\rangle^{\otimes} $, and $H'$ is the subgroup attached to  $L= (K')^{H'}$, the previous observation shows that $H'$ coincides with the isotropy group $H$ of $v$ in $\omega(\sN^\vee)$, hence is observable.
      
   (4)  
   One has $\omega((\sR')^H) = C[G]^H= C[G/H]$, hence ${\rm{Aut}}\,(\sR')^H/\sK= {\rm{Aut}}_G\,\omega((\sR')^H)= {\rm{Aut}}_G\, C[G/H] = {\rm{Aut}}_G\,  G/H  = N_G(H)/H$ (acting on $G/H$ by $nH\cdot gH = gn^{-1}H$). 
   
   Note that $\sL$ is the quotient field of $\sL \cap \sR' = (\sR')^H$ (this follows from item (2) above and the previous lemma); hence $ {\rm{Aut}}\,(\sR')^H/\sK \subset{\rm{Aut}}\,\sL/\sK $. It remains to show that any automorphism of $\sL$ preserves $  (\sR')^H$. One observes that ${\rm{Aut}}\,\sL/\sK$ permutes the differential subalgebras of $\sL$ which are finitely generated over $K$,  hence preserves their union. This union is contained in $(\sR')^H$, in fact equal to it since it is an algebra in   ${\rm{Ind}}\,  \langle \sM\rangle^{\otimes} $.  \end{proof}
   
          \begin{rem} ${\rm{Aut}}\,\sS/\sR$ may be smaller than ${\rm{Aut}}\,\sL/\sK$. Equality occurs precisely when the corresponding quasi-homogeneous variety $\overline{G.v}$ is {\it very symmetric} in the sense of \cite[\S 4.3]{Ar1}, \cf also \cite[\S 2]{ArT2} (this is the case whenever $H$ is a {\it spherical} observable subgroup of a connected reductive group $G$). 
          
          On the other hand, ${\rm{Aut}}\,\sR'/\sS $ coincides with $ {\rm{Aut}}\,\sK'/\sL = H$ since $H$ preserves $\sR'$ and $\sL$ is the quotient field of $\sS$.
  \end{rem}

     \section{Homogeneous solution algebras}\label{su5}
 
 \subsection{} Let $\sS$ be a solution algebra generated by a solution $v: \sM\to \sS$, and let $v^{\cdot}$ be its canonical extension to a surjective homomorphism of differential rings $ {\rm{Sym}}^{\cdot} \sM \to \sS$.     
  Let ${\tilde \sS}$ be the quotient of $ {\rm{Sym}}^{\cdot} \sM$ by the graded ideal $I$ 
 generated by homogeneous relations in ${\rm{Ker}}\, v^{\cdot}$, which is clearly a differential ideal: $$\, \sI= \oplus \,\sI^i, \; \sI^i= \Ker ({\rm{Sym}}^i \sM \to \sS),\; \tilde S = \oplus \,\tilde S^i,\; \tilde S^i = ({\rm{Sym}}^i\sM) /\sI^i \inj S.$$ 
  
  We first observe that, like $S$, ${\tilde  S}$ {\it is a domain}: if $  a,  b\in \tilde S$ have homogeneous decompositions $ \sum a_i$ and $\sum b_i$ respectively, and satisfy $\, a.   b= 0$, then the product of $ \sum a_i t^i$ and $ \sum b_i t^i$ must be $0$ in $\oplus\, \tilde S^i t^i\subset \tilde S[t]$ (since $\tilde S$ is a graded ring), hence goes to $0$ in $S[t]$. Since $S[t]$ is a domain, and $\oplus\, \tilde S^i t^i$ maps injectively into $S[t]$, we conclude that $  a =0$ or $ b = 0$. 
  
  On the other hand, $\tilde S \in {\rm{Ind}}\, \langle \sM\rangle^{\otimes}  $, hence is faithfully flat over $R$ by Corollary \ref{C1.4}.
    Thus  ${\rm{Proj}}\, {\tilde S}$ is an integral closed subscheme of ${\mathbb P}(M)$, faithfully flat over $R$.
     
 \subsection{} Note that  $\omega(\tilde S)$ is a graded $G$-algebra, and ${\rm{Proj}}\, {\tilde S}$ is a closed $G$-subvariety of the projective space $\,{\mathbb P}(\omega(\sM)) $ of lines in $\,V =\omega(\sM^\vee)$, which contains the image $\tilde v = [Cv]\in \,{\mathbb P}(\omega(\sM)) $ of $v\in  V$.
         Let  $\tilde H$ be the isotropy group of $\tilde v$ in $G$. The isotropy group $H$ of $v$ is normal in $\tilde H$ and the quotient $\tilde H/H$ is a closed subgroup of $\mathbb G_m$. 
       
         If $S= \tilde S$, one has a commutative square 
       \[\begin{CD}
G/H @>>>   ( \Spec \omega(\sS) )\setminus 0 \\
 @VVV  @VVV\\
   G/\tilde H@>>> {\rm{Proj}}\,\omega(\sS).
\end{CD}\]
Since the horizontal morphisms are immersions, the top one being open, and since the right vertical morphism is the quotient map by $\mathbb G_m$, one must have $\tilde H/H \cong\mathbb G_m.$        
       
  Conversely, assume that $\tilde H/H \cong\mathbb G_m.$ It can be considered as a closed subgroup of $N_G(H)/H= {\rm{Aut}}\,\sL/\sK$ (Th. \ref{T4.1} (4)). Denoting by $t\ast \ell$ the action of $t\in  C^\ast$ on $\ell\in \sL$, one has $\, t\ast (v^i(n))= (t^iv^i)(n), $ for any $i\geq 0$ and any $n\in {\rm{Sym}}^i\sM,$ so that the action $\ast$ induces a graduation of $\sS$ compatible with  ${\rm{Sym}}^{\cdot} M \to S$. This means that $S=\tilde S$. 
      
In that case, ${\rm{Proj}}\,\omega(\sS)$ is a projective quasi-homogeneous $G$-variety: indeed, in the above commutative diagram, the top and right morphisms are dominant, hence the bottom morphism is dominant as well. 

\begin{rem} This situation occurs for instance when $H$ is a {\it quasi-parabolic subgroup} of $G$, \ie the isotropy subgroup of a highest weight vector in some irreducible $G$-module. In that case, the horizontal maps of the above commutative diagram are isomorphisms  (\cf \cite{PV}).\end{rem}

   \section{Proof of the statements of \S 1} These statements concern classical differential rings (\ie the case $\Omega=R$), but extend to the case of generalized differential rings, where $\Omega$ is any projective $R$-module of finite rank. 
   
   \subsection{} Theorem \ref{T1.1}  follows immediately from Theorem \ref{T4.1}. 
   
     \subsection{Proof of Theorem \ref{T1.2}}\label{PT1.2} (1) follows from Proposition \ref{T3.1} (2). 
     
     (2) follows from Theorem \ref{T4.1} (2). 
     
     (3) follows from Theorem \ref{T3.2} (1).   
          
     (4) follows from the fact that $\sR'^H\in \rm{Ind}\, \langle \sM\rangle^{\otimes} $ corresponds via $\omega$ to $C[G]^H=C[G/H]$. Hence $\sR'^H$ (which is the maximal localization $Q(\sS)\cap \sR'$ of $\sS$ in $\sR'$) generated by some object in $\langle \sM\rangle^{\otimes} $ if and only if $C[G/H]$ is generated by a finite $G$-module, which amounts to saying that $H$ is Grosshans.
       
    (5) follows from Theorem \ref{T3.2}(4) (note that if $\sS$ is simple, $G/H$ is affine, hence is the spectrum of $ C[G/H] =   \omega(\sR')^H=  \omega(\sR'^H)$.
     
   (6):  let $\sL= (\sK')^H$ be a solution field for $\langle \sM_\sK\rangle^{\otimes} $. Then  $L$ is the quotient field of a {{unique}} solution algebra $S$ (necessarily contained in $\sR'^H$) if and only if there is a unique affine quasi-homogeneous variety $Z$ with dense orbit $G/H$ (hence $Z = G/H$). In the terminology of invariant theory, $G/H$ is affinely closed. According to Luna  \cite{L}, in case $G$ is reductive, and to Arzhantsev and Timashev \cite[\S 3.3]{ArT2} in general, this occurs precisely when the image $\bar H$  of $H$ in the reductive quotient $\bar G$ of $G$ is reductive and $N_{\bar G}(\bar H)/\bar H$ is finite.  
       
    (7) follows from Corollary \ref{C1.5}.

        \subsection{Proof of Theorem \ref{T1.3}} (1) $\sM$ is semisimple if and only if $G$ is reductive. For any $W\in {\rm{Rep}}\,G$ such that the action of $G$ factors through a finite group $G'$, the corresponding Picard-Vessiot algebra is a finite connected torsor under $G' $ over $C[z]$, hence $G'=\{1\}$. Therefore  $G$ is connected. According to Raghunathan and Ramanathan \cite{RR}, any torsor under a connected reductive group over $C[z]$ is trivial, hence the torsor of solutions of $\sM$ is trivial, which means that $\omega_{C[z]}\cong \vartheta$ (\cf \S \ref{su2.3}). In particular, $\omega(\sS)_{C[z]}\cong S$ as $R$-algebras, and $Z=\Spec\,\omega(\sS)$ is a quasi-homogeneous $G$-variety by Theorem \ref{T3.2} (1).
        
        (2) Let $G$ be connected reductive, and let $Z$ be an affine quasi-homogeneous $G$-variety. As in the proof of  \ref{T3.2} (2), one can embed $Z$ as a closed $G$-subset in a finite-dimensional $G$-module $V$ (which we may assume to be faithful).   The constructive solution (by Mitschi and Singer \cite{MS}) of inverse differential Galois theory attaches to $G\inj GL(V)$ a (semisimple) differential module $\sM$ over $C[z]$ with differential Galois group $G$. Theorem \ref{T3.2} (1)  shows how to construct a solution algebra $\sS$ for $\sM$, with $\omega(\sS)= C[Z]$, and by the previous item, $\omega(\sS)_{C[z]}\cong S$ as $R$-algebras.

             \subsection{Proof of Theorem \ref{T1.4}} (1) has been proven in \S \ref{su5}.
                          
             (2) follows from Corollary \ref{C1.5}.     
             
             (3) Since $\Spec S$ is an algebraic fiber bundle over $\Spec R$, all fibers are integral if and only if the generic fiber is geometrically integral, \ie $K$ is algebraically closed in $L=Q(S)$. Assume that this is the case. 
             
             Since $\rm{Proj}\, \tilde S$ is an algebraic fiber bundle over $\Spec R$, all fibers are integral if and only if the generic fiber of the affine cone is geometrically integral. One may assume that $R=K$, and one has to show that for any finite extension $\sK_1/\sK$ in $\tilde \sS$, $\tilde S\otimes_K {K_1}$ is a domain. This is done by the same argument as in \S \ref{su5}, taking into account the fact that $(S\otimes_K {K_1})[t] $ is a domain. 
             
      \subsection{Final remarks}\label{R6.1}  (1) In the context of Corollary \ref{C1.5}, one can deduce  directly the homogeneous case from the inhomogeneous case, as follows. Let  $P(y , \ldots , y^{(n-1)})=0$ be a polynomial relation of degree $D$ with coefficients in $R$, which becomes homogeneous of degree $d\leq D$ after specialization at $z=\xi$.   Let $P_d$ be the homogeneous part of degree $d$ of $P$, and write $P= P_d+(z-\xi)Q$. Then $Q$ (\resp $P_d$) maps naturally to an element  of $S^{\leq D}= \im(\tilde S^{\leq D}\to S)$ (\resp $S^d = \im(\tilde S^{d}\to S)$). The quotient $S^{\leq D}/S^d$ is a finitely generated differential $R$-module, hence torsion-free since $\sR$ is simple. Since $(z-\xi)Q$ goes to $0$ in $S^{\leq D}/S^d$, so does $Q$, \ie there is $Q_d$ homogeneous of degree $d$ such that $(P_d+(z-\xi)Q_d)(y , \ldots , y^{(n-1)})=0$.

   \smallskip (2) One question frequently asked by algebraic geometers regarding differential Galois theory is the following: is there a ``sheaf-theoretic version" valid over any smooth connected algebraic $C$-variety $X$ (not necessarily affine)?  
             Here is an answer.
             
             The generalized differential ring $\sR$ is replaced by $(X,\,  d_X: \sO_X \to \Omega^1_X)$. Being in characteristic $0$ ensures that $\Ker d_X$ is the constant sheaf $C$. Differential extensions $\sS/\sR$ have to be replaced by (not necessarily smooth) morphisms  $Y\stackrel{f}{\to} X $  together with a retraction $\rho: \Omega^1_Y\to   f^\ast\Omega^1_X$ of the natural morphism $ f^\ast\Omega^1_X\to \Omega^1_Y$ (assumed to be injective); whence a derivation  $d= \rho\circ d_Y: \sO_Y \to f^\ast\Omega^1_X$  extending $f^{-1}d_X$. 
             
             Let $\sM$ be a coherent $\sO_X$-module with a (not necessarily integrable) connection.  The underlying module is locally free and the category of subquotients of finite direct sums of $ \sM^{\otimes i}\otimes (\sM^\vee)^{\otimes j}$ is neutral tannakian over $C$. The fiber at any closed point $x$ is a fiber functor $\omega_x$ with values in ${\rm{Vec}}_C$. The differential Galois group pointed at $x$ is $G_x={\bf{Aut}}^\otimes \omega_x$. One constructs the torsor of solutions $\Sigma_x$ as in the affine case; it is a torsor under the affine $X$-group $(G_x)_X$, and it admits a canonical structure of differential extension in the above sense.  All this is a straightforward modification of \S\ref{su2.2}, \ref{su2.3}, \ref{su2.4}. 
      
       \smallskip      (3) We expect that a similar theory of solution algebras holds in characteristic $p$, provided one uses Schmidt ``iterated derivatives" or (in higher dimension) the ring of differential operators in the sense of Grothendieck [EGAIV, \S 16.8]. 
          
          We also expect a similar theory for difference equations, or mixed difference-differential equations (for instance $p$-adic differential equations with Frobenius structure),  and we even expect a common framework with the above theory, using non-commutative bimodules $\Omega$ as in \cite{A3}, which unifies differential algebra and difference algebra. One should however pay attention to the fact that simple difference rings may have zero divisors. In the definition of (difference) solution algebras, one should then replace the condition that $\sS$ is a domain by the condition that it be contained in a simple difference algebra.  
    
    \end{sloppypar}

\bigskip{\smit Acknowledgements.} {\sm I thank A. Pianzola for several useful discussions about torsors on open subsets of the line (\cf \ref{RR2}), and S. Gorchinsky for a remark which led to a simplification of the proof of Proposition \ref{T3.1}.

  \medskip

 \end{document}